\newcommand{\RM}[1]{\MakeUppercase{\romannumeral #1{.}}}
\newcommand {\QQ}{{\mathbb Q}}
\newcommand {\RR}{{\mathbb R}}
\newcommand {\ZZ}{{\mathbb Z}}
\DeclareMathOperator {\mult}{mult}
\DeclareMathOperator {\dist}{dist}
\DeclareMathOperator {\lab}{lab}
\DeclareMathOperator {\trop}{trop}
\DeclareMathOperator {\ev}{ev}
\DeclareSymbolFont {mysymbols}{OMS}{cmsy}{m}{n}
\DeclareMathSymbol {\calI}{\mathalpha}{mysymbols}{`I}
\DeclareMathSymbol {\calK}{\mathalpha}{mysymbols}{`K}
\DeclareMathSymbol {\calO}{\mathalpha}{mysymbols}{`O}
\DeclareMathSymbol {\calT}{\mathalpha}{mysymbols}{`T}
\DeclareSymbolFont {mylargesymbols}{OMX}{cmex}{m}{n}
\DeclareMathSymbol {\dunion}{\mathop}{mylargesymbols}{"60}
\newcommand {\calM}{{\mathcal M}}
\newcommand {\df}[1]{\textsl {#1}}
\newcommand {\preprint}[2]{preprint \discretionary {#1/}{#2}{#1/#2}}
\newcommand {\twolines}[2]{\stackrel{\scriptstyle {#1}}{#2}}
\renewenvironment {enumerate}%
  {\rule{1mm}{0mm}\begin {oldenumerate}%
    \parskip1ex plus0.5ex \itemsep 0mm \parindent 0mm}%
  {\end {oldenumerate}}
\renewenvironment {itemize}%
  {\rule{1mm}{0mm}\begin {olditemize}%
    \parskip1ex plus0.5ex \itemsep 0mm \parindent 0mm}%
  {\end {olditemize}}
\theoremstyle {plain}
\newtheorem {theorem}{Theorem}[section]
\newtheorem {proposition}[theorem]{Proposition}
\newtheorem {lemma}[theorem]{Lemma}
\newtheorem {corollary}[theorem]{Corollary}
\theoremstyle {definition}
\newtheorem {definition}[theorem]{Definition}
\theoremstyle {remark}
\newtheorem {remark}[theorem]{Remark}
\newtheorem {remdef}[theorem]{Remark and Definition}
\newtheorem {example}[theorem]{Example}
\newtheorem {construction}[theorem]{Construction}
\newtheorem{explanation}[theorem]{Explanation}
\begin{document}
\title [Tropical orbit spaces and the moduli spaces of elliptic tropical curves]{Tropical
  orbit spaces and the moduli spaces of elliptic tropical curves}
\author {Matthias Herold}

\address {Matthias Herold, Fachbereich Mathematik, TU Kaiserslautern, Postfach
  3049, 67653 Kaiserslautern, Germany}
\email {herold@mathematik.uni-kl.de}

\begin {abstract}
  We give a definition of tropical orbit spaces and their morphisms. We show that, under certain conditions, the weighted number of preimages of a point in the target of such a morphism does not depend on the choice of this point. We equip the moduli spaces of elliptic tropical curves with a structure of tropical orbit space and, using our results on tropical orbit spaces, simplify the known proof of the fact that the weighted number of plane elliptic tropical curves of degree $d$ with fixed $j$-invariant which pass through $3d-1$ points in general position in $\RR^2$ is independent of the choice of a configuration of points.
\end {abstract}

\maketitle

\section{introduction}
Tropical geometry is a relatively new mathematical domain. It has applications in several branches of mathematics and, in particular, has been used for solving various enumerative problems. One of the first results concerning enumerative problems in this domain was achieved by G.~Mikhalkin in \cite{M1}. He established an important correspondence between complex algebraic curves satisfying certain constraints and tropical analogs of these curves. This correspondence theorem was reproven in slightly different forms in \cite{NS}, \cite{Sh} and \cite{ST}.
Mikhalkin's results initiated the study of enumerative problems in tropical geometry (see for example \cite{GM1}, \cite{GM2}, \cite{GM3}). Dealing with counting problems, it is naturally to work with moduli spaces. The first step in this direction was the construction of the moduli spaces of rational curves given in \cite{M2} and \cite{GKM}. In \cite{GKM} the authors developed some tools to deal with enumerative problems for rational curves, using the notation of tropical fan. They introduced morphisms between tropical fans and showed that, under certain conditions, the weighted number of preimages of a point in the target of such a morphism does not depend on the chosen point. After showing that the moduli spaces of rational tropical curves have the structure of a tropical fan, they used this result to count rational curves passing through given points.

Following their approach, we introduce similar tools for enumerative problems concerning tropical curves of genus $1$. Considering moduli spaces of elliptic tropical curves, it is natural to expect the appearance of a counterpart of stacks in the tropical setting. Since we are mainly interested in the quotient structure of the moduli spaces it is clear that the definition for the counterpart of stacks, given in this paper, will not be the final one. Therefore, we call our objects tropical orbit spaces instead of tropical stacks. The definition is given in the second chapter. With the help of this definition, we develop some tools for dealing with tropical enumerative problems in genus $1$. The main statement of the second chapter is Corollary \ref{cor-image} which states that, for surjective morphisms between tropical orbit spaces of the same dimension such that the target is irreducible, the number of preimages (counted with multiplicities) is the same for each general choice of a point. The corollary can be used to prove invariance in tropical enumerative problems in genus $1$. In chapter $3$ we show that the moduli spaces of elliptic tropical curves with fixed $j$-invariant have a structure of a tropical orbit space. Then, we use the tools elaborated in chapter $2$ for the enumerative problems of counting elliptic tropical curves with fixed $j$-invariant (these problems were first considered in \cite{KM}), and simplify the proof of one of the main results in \cite{KM}.


The author would like to thank Andreas Gathmann and Johannes Rau for the introduction to the problem and he would like to thank Andreas Gathmann and Ilia Itenberg for many helpful discussions.

\section{ tropical orbit space}

In this part we denote a finitely generated free abelian
group by $\Lambda$ and the corresponding real vector space $
\Lambda \otimes_{\ZZ} \RR$ by $V$. So we can consider $\Lambda$ as a
lattice in $V$. The dual lattice in the vector space $V^{\vee}$ is denoted by $\Lambda^{\vee}$.

\begin{definition}[General and closed cones] \label{def-cones}
  A $general\ cone$ in $V$ is a subset $\sigma \subseteq V$ that can be
  described by finitely many linear integral equalities,
  inequalities and strict inequalities, i.e. a set of the form
  \[ \sigma = \{ x \in V | f_1(x)=0, \dots, f_r(x)=0, f_{r+1}(x) \geq 0, \dots, f_{r+s}(x) \geq 0, \] \[  f_{r+s+1}(x)>0, \dots, f_{N}(x)>0 
  \} \tag{$*$}\]

  for some linear forms $f_1, \dots, f_N \in \Lambda^{\vee}$. We
  denote by $V_\sigma$ the smallest linear subspace of $V$
  containing $\sigma$ and by $\Lambda_\sigma$ the lattice
  $V_\sigma \cap \Lambda$. We define the $dimension$ of
  $\sigma$ to be the dimension of $V_\sigma$.
  We call $\sigma$ a $closed\ cone$ if there are no strict inequalities in $ (*) $ (i.e. if $N=r+s$).
\end{definition}

\begin{definition}[Face]
A $face$ of $\sigma$ is a general cone $\tau \subset \sigma$ which can be obtained from $\sigma$ by changing some of the non-strict inequalities in $ (*) $ to equalities. 
\end{definition}

\begin{definition} [Fan and general fan]
A $fan$ in $V$ is a set $X$ of closed cones in $V$ such that 
\vspace{-0.5 cm}
\begin{enumerate}
\item each face of a cone in $X$ is also a cone in $X$;
\item the intersection of any two cones in $X$ is a face of each of them.
\end{enumerate}
A {\it general fan} in $V$ is a set $\tilde{X}$ of general cones in $V$ satisfying the following property: there exist a fan $X$ and a subset $R \subset X$ such that $\tilde{X}=\{\tau \setminus U|\ \tau \in X\}$, where $U= \bigcup_{\sigma \in R} \sigma$. We put $|\tilde{X}|=\bigcup_{\tilde{\sigma}\in \tilde{X}}\tilde{\sigma}$.
A (general) fan is called {\it pure-dimensional}, if all its inclusion-maximal cones are of the same dimension. In this case we call the highest dimensional cones \df{facets}. The set of $n-$dimensional cones of a (general) fan $X$ is denoted by $X^{(n)}$.
\end{definition}

\begin{construction}[Normal vector] \label{normal}
If $\emptyset \not=\tau,\ \sigma$ are cones in $V$ and $\tau$ is a subcone of $\sigma$ such that $\dim \tau = \dim \sigma -1$, then there is a non-zero linear form $g \in \Lambda^{\vee}$, which is zero on $\tau$ and  positive on $\sigma \backslash \tau$. Then $g$ induces an isomorphism $V_{\sigma}/ V_{\tau}\cong \RR$.
There exists a unique generator $u_{\sigma / \tau} \in \Lambda_{\sigma}/ \Lambda_{\tau}$, lying in the same half-line as $\sigma/ V_{\tau}$ and we call it the primitive $normal\ vector$ of $\sigma$ relative to $\tau$.
In the following we write $\tau\leq\sigma$ if $\tau$ is a subcone of $\sigma$ and $\tau< \sigma$ if $\tau$ is a proper subcone of $\sigma$.
\end{construction}

\begin{definition}[Weighted and general tropical fans]
A $weighted\ fan$ $\left( X,\omega_X \right)$ in $V$ is a pure-dimensional general fan $X$ of dimension $n$ with a map $\omega_X : X^{\left( n \right)} \rightarrow \QQ$. The numbers $\omega_X \left( \sigma \right)$ are called $weights$ of the general cones $\sigma \in X^{\left( n \right)}$. By abuse of notation we also write $\omega$ for the map and $X$ for the weighted fan.\\
A $general\ tropical\ fan$ in $V$ is a weighted fan $\left( X,\omega_X \right)$ fulfilling the balancing condition
$$ \sum_{\sigma > \tau} \omega_X \left( \sigma \right) \cdot u_{\sigma / \tau}=0 \quad \in V/V_{\tau}  $$
for any $\tau \in X^{\left( \dim X-1 \right)}$.
\end{definition}

\begin{definition}[Open fans] \label{defn-openfan}
  Let $\widetilde{F}$ be a general fan in
  $\RR^n$ and $0\in U \subseteq \RR^n$ an open subset. The set
  $F= \widetilde{F} \cap U = \{\sigma \cap U | \sigma \in \widetilde{F} \}$ is called an \df{open fan} in
  $\RR^n$. As in the case of fans, put $|F|= \bigcup_{\sigma' \in F}
  \sigma'$.\\
If $\widetilde{F}$ is a general weighted fan, we call $F$ a {\it weighted open fan}.
\end{definition}

\begin{remark}
 Since $0\in U$ is open, $\tilde{F}$ is defined by $F$.
\end{remark}

\begin{definition}[General polyhedron]
A \df{general polyhedron} is a set $\sigma \subset \RR^n$ such that there exists a rational polyhedron $\tilde{\sigma}$ and a union $u$ of faces of $\sigma$ such that $\sigma=\tilde{\sigma}\backslash u$. (This definition is equivalent to saying that the faces have the following form $\{ x \in V | f_1(x)=p_1, \dots,$ $f_r(x)=p_r,$ $f_{r+1}(x) \geq p_{r+1}, \dots,$ $f_{r+s}(x) \geq p_{r+s},$ $f_{r+s+1}(x)>p_{r+s+1}, \dots,$ $f_{N}(x)>p_{N} 
  \}$ for some linear forms $f_1, \dots, f_N \in \ZZ^n$ and numbers $p_1,\cdots, p_N \in \RR$.)
\end{definition}

\begin{definition}[General polyhedral precomplexes]\label{defn-prepolycomplex}
A \df{(general) polyhedral precomplex} is a topological space $|X|$ and a set $X$ of subsets of $|X|$ equipped with embeddings $\varphi_{\sigma}: \sigma \rightarrow \RR^{n_{\sigma}}$ for all $\sigma \in X$ such that
\vspace{-0.1 cm}
\begin{enumerate}
    \item $X$ is closed under taking intersections, i.e. $\sigma \cap \sigma' \in X$ is a face of $\sigma$ and of $\sigma'$ for any $\sigma, \sigma' \in X$ such that $\sigma \cap \sigma' \neq
    \emptyset$,
    \item every image $\varphi_{\sigma}(\sigma)$, $\sigma \in X$ is a general polyhedron, not       contained in a proper affine subspace of $\RR^{n_{\sigma}}$,
    \item for every pair $\sigma, \sigma' \in X$
    the composition $\varphi_{\sigma} \circ \varphi_{\sigma'}^{-1}$ is integer
    affine-linear on $\varphi_{\sigma'}(\sigma \cap \sigma')$,
    \item $|X|=\bigcup\limits_{\sigma \in X}^{\mbox{\Large .}}
    \varphi_{\sigma}^{-1}(\varphi_{\sigma}(\sigma)^{\circ})$, where
    $\varphi_{\sigma}(\sigma)^{\circ}$ denotes the interior of
    $\varphi_{\sigma}(\sigma)$ in $\RR^{n_{\sigma}}$.
\end{enumerate} 
\end{definition}

\begin{definition}[General polyhedral complexes] \label{defn-polycomplex}
  A \df{(general) polyhedral complex} is a (general) polyhedral precomplex $(|X|,X,\{\varphi_{\sigma}|\sigma \in X\})$ such that for every polyhedron $\sigma \in X$ we are given an open fan $F_\sigma$ (denoted as well by $F_{\sigma}^X$ to underline that it belongs to the complex $X$ ) in some $\RR^{n^*_\sigma}$ and a homeomorphism
  $$\Phi_{\sigma}: S_{\sigma} = \bigcup_{\sigma' \in X, \sigma' \supseteq \sigma}
  (\sigma')^{ri} \stackrel{\sim}{\longrightarrow} |F_{\sigma}|$$
  satisfying:
  \begin{enumerate}
    \item for all $\sigma' \in X, \sigma' \supseteq \sigma$ one has $\Phi_{\sigma}(\sigma' \cap S_{\sigma}) \in F_{\sigma}$
    and $\Phi_{\sigma}$ is compatible with the $\ZZ$-linear structure
    on $\sigma'$, i.e. $\Phi_{\sigma} \circ \varphi_{\sigma'}^{-1}$
    and $\varphi_{\sigma'} \circ \Phi_{\sigma}^{-1}$ are integer affine
    linear on $\varphi_{\sigma'}(\sigma' \cap S_{\sigma})$, resp. $\Phi_{\sigma}(\sigma' \cap S_{\sigma})$,
    \item for every pair $\sigma, \tau \in X, $ there is an integer affine linear map $A_{\sigma,\tau}$ such that the following diagram commutes:
    $$\begin{xy}
      \xymatrix{ S{_\sigma} \cap S_{\tau} \ar[d]^{\sim}_{\Phi_{\sigma}} \ar[r]_{\sim}^{\Phi_{\tau}} &
      \Phi_{\tau}(S_{\sigma} \cap S_{\tau})\\
      \Phi_{\sigma}(S_{\sigma} \cap S_{\tau}) \ar[ur]_{A_{\sigma,\tau}} }
    \end{xy}.$$
  \end{enumerate}
  
  For simplicity we usually drop the embeddings $\varphi_{\sigma}$
  or the maps $\Phi_{\sigma}$ in the notation and denote the polyhedral complex $(X,|X|,\{ \varphi_{\sigma} | \sigma \in X\}, \{\Phi_{\tau}| \tau \in X\})$ by $(X,|X|,\{ \varphi_{\sigma} | \sigma \in X\})$ or by $(X,|X|,\{ \varphi\}, \{\Phi_{\tau}| \tau \in X\})$  or by $(X,|X|)$ or just by $X$ if no confusion can occur.
  The subsets $\sigma \in X$ are called the \df{general polyhedra}
  or \df{faces of $(X,|X|)$}. For $\sigma \in X$ the open set $\sigma^{ri} =
  \varphi_{\sigma}^{-1}(\varphi_{\sigma}(\sigma)^{\circ})$ is called the
  \df{relative interior of $\sigma$}. The \df{dimension} of
  $(X,|X|)$ is the maximum of the dimensions of its general polyhedra.
  We call $(X,|X|)$ \df{pure-dimensional} if all its inclusion-maximal general
  polyhedra are of the same dimension. We denote by $X^{(n)}$ the set
  of polyhedra in $(X,|X|)$ of dimension $n$. Let $\tau, \sigma \in X$. As in the case of
  fans we write $\tau \leq \sigma$ (or $\tau < \sigma$) if $\tau \subseteq \sigma$
  (or $\tau \subsetneq \sigma$, respectively). By abuse of notation we identify $\sigma$ with $\varphi_{\sigma}(\sigma)$.
\medskip \\
  A general polyhedral complex $(X,|X|)$ of pure dimension $n$
  together with a map $\omega_X : X^{(n)} \rightarrow \QQ$ is called
  \df{weighted polyhedral complex} of dimension $n$, and
  $\omega_X(\sigma)$ is called the \df{weight} of the polyhedron $\sigma \in X^{(n)}$, if all $F_{\sigma}$ are weighted open fans and
\begin{itemize}
 \item $\omega_X(\sigma') = \omega_{F_\sigma} (\Phi_{\sigma}
    (\sigma' \cap S_{\sigma}))$ for every $\sigma' \in (X)^{(n)}$ with
    $\sigma' \supseteq \sigma$,
\end{itemize}
  The empty complex $\emptyset$ is a
  weighted polyhedral complex of every dimension.
  If $\left( (X,\right.$ $|X|),$ $\left. \omega_X\right)$ is a weighted polyhedral
  complex of dimension $n$, then put $$X^{*}= {\{ \tau \in X | \tau \subseteq \sigma \text{ for
  some } \sigma \in X^{(n)} \text { with } \omega_X(\sigma) \neq 0
  \}}, |X^{*}| = \bigcup_{\tau \in X^{*}} \tau \subseteq |X|.$$
  Note that $\left( (X^{*},|X^{*}|), \omega_X|_{(X^{*})^{(n)}} \right)$
  is again a weighted polyhedral complex of dimension $n$. This complex is
  called the \df{non-zero part} of $\left( (X,|X|), \omega_X \right)$.
  We call a weighted polyhedral complex $\left( (X,|X|), \omega_X \right)$
  \df{reduced} if $\left( (X,|X|), \omega_X \right)=\left( (X^{*},|X^{*}|), \omega_{X^{*}} \right)$.
\end{definition}

\begin{definition}[Subcomplexes and refinements]
 Let $(X,$ $|X|,$ $\{ \varphi_{\sigma} | \sigma \in X \})$ and $(Y,$ $|Y|,$ $\{ \psi_{\tau} | \tau \in Y\})$ be two polyhedral complexes. We call $X$ a {\it subcomplex} of $Y$ if 
\begin{enumerate}
 \item $|X|\subseteq |Y|$,
 \item for every $\sigma$ in $X$ there exists a $\tau \in Y$ with $\sigma \subseteq \tau$, 
 \item for a pair $\sigma$ and $\tau$ from (b) the maps $\varphi_{\sigma}\circ \psi_{\tau}^{-1}$ and $\psi_{\tau} \circ \varphi_{\sigma}^{-1}$ are integer affine linear on $\psi_{\tau}(\sigma)$, resp. $\varphi_{\sigma}(\sigma)$.
\end{enumerate}
We write $(X,|X|)<(Y,|Y|)$ in this case, and define a map $C_{X,Y}:X\rightarrow Y$ that maps a cone in $X$ to the inclusion-minimal cone in $Y$ containing it.\\
We call a polyhedral complex $(X,|X|)$ a refinement of $(Y,|Y|)$, if
\begin{enumerate}
 \item $(X,|X|)<(Y,|Y|)$
 \item $|X|=|Y|$
\end{enumerate}
We call a weighted polyhedral complex $(X,|X|)$ a refinement of a weighted polyhedral complex $(Y,|Y|)$ if in addition the following condition holds:
\begin{itemize}
\item $\omega_X(\sigma) = \omega_Y(C_{X^{*},Y^{*}}(\sigma))$ for all
    $\sigma \in (X^{*})^{(\dim(X))}$.
\end{itemize}
\end{definition}

\begin{definition}[Morphism of general polyhedral complexes]
\label{def-morphgeneral}
 Let $X$ and $Y$ be two (general) polyhedral complexes. A \df{morphism of general polyhedral complexes} $f:X \rightarrow Y$ is a continuous map $f:|X|\rightarrow |Y|$ with the following properties: There exist refinements $(X',|X'|,\{\varphi\},\{ \Phi_{\sigma} | \sigma \in X' \})$ of $X$ and $(Y',|Y'|,\{\psi\},\{ \Psi_{\tau} | \tau \in Y')$ of $Y$ such that
\begin{enumerate}
    \item \label{gpma} for every general polyhedron $\sigma \in X'$ there exists a general polyhedron $\widetilde{\sigma} \in Y'$
    with $f(\sigma) \subseteq \widetilde{\sigma}$,
    \item for every pair $\sigma, \widetilde{\sigma}$ from \ref{gpma} the map
    $\Psi_{\widetilde{\sigma}} \circ f \circ \Phi_{\sigma}^{-1}: |F_{\sigma}^{X'}| \rightarrow |F_{\widetilde{\sigma}}^{Y'}|$
    induces a morphism of fans $\widetilde{F}_{\sigma}^{X'} \rightarrow
    \widetilde{F}_{\widetilde{\sigma}}^{Y'}$ , where $\widetilde{F}_{\sigma}^{X'}$ and
    $\widetilde{F}_{\widetilde{\sigma}}^{Y'}$ are the general fans given in Definition \ref{defn-openfan}.
 \end{enumerate}

A morphism of \df{weighted polyhedral complexes} is a morphism of polyhedral complexes (i.e. there are no conditions on the weights). If $X=Y$ and if there exists a morphism $g:X \rightarrow Y $ such that $g\circ f=f \circ g = id_X$ we call $f$ an {\it automorphism} of $X$.

\end{definition}

\begin{definition}[Orbit space]\label{arbitraryorbitspace}
Let $X$ be a polyhedral complex and $G$ a group acting on $|X|$ such that each $g \in G$ induces an automorphism on $X$.
We denote the induced map of an element $g \in G$ on $X$ by $g(.)$ and the induced homeomorphism on $|X|$ by $g\{.\}$. 
We denote by $X/G$ the set of $G-$orbits of $X$ and call $X/G$ an {\it orbit space}.
\end{definition}

\begin{example}
 The following example shows the topological space of an orbit space with trivial group $G$ and the open fans $F_{\sigma}$ for all $\sigma$. The group $G$ is trivial and thus the orbit space is the same as the polyhedral complex (i.e. $X=X/G$).
$$
\begin{minipage}{\linewidth}
\begin{center}
\begin{picture}(0,0)%
\includegraphics{quot.pstex}%
\end{picture}%
\setlength{\unitlength}{4144sp}%
\begingroup\makeatletter\ifx\SetFigFontNFSS\undefined%
\gdef\SetFigFontNFSS#1#2#3#4#5{%
  \reset@font\fontsize{#1}{#2pt}%
  \fontfamily{#3}\fontseries{#4}\fontshape{#5}%
  \selectfont}%
\fi\endgroup%
\begin{picture}(2521,1464)(3297,-3493)
\end{picture}%
\end{center}
\end{minipage}
$$
Let us now take the same polyhedral complex, with the ray lying on the $x$-axis. For $G$ take the group with two elements, generated by the map $f:\RR^2 \rightarrow \RR^2, y\mapsto -y$. The topological picture of the orbit space is the following:
$$
\begin{minipage}{\linewidth}
\begin{center}
\begin{picture}(0,0)%
\includegraphics{quot2.pstex}%
\end{picture}%
\setlength{\unitlength}{4144sp}%
\begingroup\makeatletter\ifx\SetFigFontNFSS\undefined%
\gdef\SetFigFontNFSS#1#2#3#4#5{%
  \reset@font\fontsize{#1}{#2pt}%
  \fontfamily{#3}\fontseries{#4}\fontshape{#5}%
  \selectfont}%
\fi\endgroup%
\begin{picture}(2522,1149)(3296,-3178)
\end{picture}%
\end{center}
\end{minipage}
$$
\end{example}

\begin{definition}[Weighted orbit space]
Let $\left( X,\omega_X \right)$ be a weighted polyhedral complex of dimension $n$, and $G$ a group acting on $X$. If $X/G$ is an orbit space such that
\begin{itemize}
\item for any $ g \in G$ and for any $\sigma \in X^{\left( n \right)}$, one has $ \omega_X \left( \sigma\right)= \omega_X \left( g(\sigma) \right)$,
\end{itemize}
we call $X/G$ a \df{weighted orbit space}.
The classes $[\sigma] \in X/G$, given by the orbits of $G$, are called {\it weighted classes}. 
\end{definition}

\begin{definition}[Stabilizer, $G_{\tau}-$orbit of $\sigma$]
Let $X$ and $G$ be as above and $\tau, \sigma \in X$. We call $G_{\tau}=\{ g \in G| g\{x\}=x\ \text{for any } x\in \tau\}$ the \df{stabilizer} of $\tau$. We define $X_{\sigma/\tau}=\{ g(\sigma)| g\in G_{\tau}\}$ to be the \df{$G_{\tau}-$orbit of $\sigma$}.
\medskip \\
The weight function on the weighted classes of $X/G$ is denoted by $[\omega]$ and defined by $[\omega]([\sigma])= \omega(\sigma)/|G_{\sigma}|$, for all $[\sigma]\in X/G$.
\end{definition}

\begin{remark}
We could define a weighted orbit space as well by giving an orbit space and a weight for each class instead of defining the weights of the orbit space by the weights of the complex and the group action.
\end{remark}

\begin{definition}[Suborbit space]
Let $X/G$ be an orbit space. An orbit space $Y/G$ is called a \df {suborbit space} of $X/G$ (notation: $ Y/G \subset X/G $) if each general polyhedron of $Y$ is contained in a general polyhedron of $X$ and each element of $G$ acts on the faces of $Y$ in the same way as for $X$ (i.e. for all $g\in G$, $\sigma \in Y$ we have $g_{|Y|}\{x\}=g_{|X|}\{x\}$ for $x\in \sigma$). In this case we denote by $ C_{Y,X}: Y \to X $ the map which sends a general polyhedron $\sigma \in Y $ to the (unique) inclusion-minimal general polyhedron of $X$ that contains $ \sigma $. Note that for a suborbit space $ Y/G \subset X/G $ we obviously have $ |Y| \subset |X| $ and $ \dim C_{Y,X} (\sigma) \ge \dim \sigma $ for all $ \sigma \in Y $.
\end{definition}

\begin{definition}[Refinements]
 Let $((Y,$ $|Y|),$ $\omega_Y)/G$ and $((X,$ $|X|),$ $\omega_X)$ $/G$ be two weighted orbit spaces. We call $((Y,|Y|),\omega_Y)/G$ a \df{refinement} of $((X,|X|),\omega_X)/G$, if 
\begin{enumerate}
 \item [(a)] $((Y,|Y|),\omega_Y)/G \subset ((X,|X|),\omega_X)/G$,
 \item [(b)] $|Y^*|=|X^*|$,
 \item [(c)] $\omega_Y(\sigma)=\omega_X(C_{Y,X}(\sigma))$ for all $\sigma \in (Y^*)^{(\dim (Y))}$,
 \item[(d)] each $\sigma \in Y$ is closed in $|X|$.
\end{enumerate}
We say that two weighted orbit spaces $((X,|X|),\omega_X)/G$ and $((Y,|Y|),\omega_Y)/G$ are equivalent (notation: $((X,|X|),\omega_X)/G \cong ((Y,|Y|),\omega_Y)/G$) if they have a common refinement. 
\end{definition}

\begin{definition}[Global orbit space]
 Let $F$ be a set of orbit spaces and $E$ a set of embeddings $ \phi_{X,Y,\sigma}:\sigma^{\circ} \rightarrow Y$, given by affine linear maps, of the interior of a polyhedron $\sigma\in X$, with $X$, $Y \in F$. After refinement of $Y$ there exists a cone $\tilde{\sigma}$ in $Y$, such that $\tilde{\sigma}^{\circ}=\phi_{X,Y,\sigma}(\sigma^{\circ})$. Since $\sigma \subset \RR^n$ and $\varphi_{\tilde{\sigma}}\circ\phi_{X,Y,\sigma}$ is an affine linear map there exists a continuation $\widetilde{\varphi_{\tilde{\sigma}}\circ\phi_{X,Y,\sigma}}$ of $\varphi_{\tilde{\sigma}}\circ\phi_{X,Y,\sigma}$ on $\sigma$.
If $\widetilde{\varphi_{\tilde{\sigma}}\circ\phi_{X,Y,\sigma}} \cap \varphi_{\tilde{\sigma}}(\tilde{\sigma})= \varphi_{\tilde{\sigma}}(\tilde{\sigma}^{\circ})$, we glue the orbit spaces along these maps. The resulting topological space together with $F$ and $E$ is called \df{global orbit space}.
\end{definition}

\begin{remark}
 The global orbit space is a topological space which locally is an orbit space. In the same way one could define a weighted and later on a global tropical orbit space. Perhaps one would prefer to call the orbit space local orbit space, and the global orbit space only orbit space, but since all our objects will have a global group operation we keep these names. For weighted global orbit spaces one would need the condition that the weights of the glued cones coincide.
\end{remark}

\begin{definition}[Tropical orbit space]\label{troporbitspace}
 Let $\left( X,\omega_X \right)/G$ be a weighted orbit space with finitely many different classes and $|G_{\sigma}|<\infty $ for any $\sigma \in X^{(n)}$. If 
for any $\tau \in X^{(n-1)}$, one has $ \# \{ \sigma >\tau \}$ $< \infty$ and there exists $\lambda_{\sigma/\tau}$ $\geq 0\ $ for any $\sigma$ $> \tau $ such that $\sum_{\tilde{\sigma} >\tau, \tilde{\sigma}\in X_{\sigma/\tau}}$ $\lambda_{\tilde{\sigma}/\tau}$ $=1$ 
and $\sum_{\sigma>\tau}$ $\lambda_{\sigma/\tau}$ $[\omega]_X([\sigma])(u_{\sigma /\tau})$ $\in V_{\tau}$, then $X/G$ is called a {\it tropical orbit space}.
\end{definition}

\begin{proposition}\label{equiv}
 Let $\left( X,\omega_X \right)$ be a weighted fan in $V$ and $G \subset Gl(V)$ such that $X/G$ is a weighted orbit space. If $G$ is finite and all $general\ cones$ in $X$ are $closed\ cones$, then $\left( X,\omega_X \right)$ is a tropical fan if and only if $X/G$ is a tropical orbit space.
\end{proposition}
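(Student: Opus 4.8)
The plan is to fix a ridge $\tau \in X^{(n-1)}$ and compare, term by term in $V/V_\tau$, the balancing sum $\sum_{\sigma > \tau}\omega_X(\sigma)\,u_{\sigma/\tau}$ of the tropical-fan condition with the weighted sum $\sum_{\sigma>\tau}\lambda_{\sigma/\tau}[\omega]_X([\sigma])\,u_{\sigma/\tau}$ of Definition~\ref{troporbitspace}. The crucial preliminary step, which I expect to be the main obstacle, is a stabilizer lemma: for $\sigma \in X^{(n)}$ with $\sigma > \tau$, the setwise stabilizer $\{g \in G_\tau : g(\sigma)=\sigma\}$ equals the pointwise stabilizer $G_\sigma$. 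To prove it, note that every $g \in G_\tau$ fixes $V_\tau$ pointwise; if in addition $g(\sigma)=\sigma$, then $g$ preserves $V_\sigma$ together with the ray $\sigma/V_\tau$, hence acts on the one-dimensional quotient $V_\sigma/V_\tau$ by a positive scalar, which must equal $1$ because $G$ is finite. Expressed in a basis adapted to $V_\tau \subset V_\sigma$, the map $g|_{V_\sigma}$ is then unipotent of finite order, so $g|_{V_\sigma}=\mathrm{id}$ and $g \in G_\sigma$. This is precisely where the finiteness of $G$ is used; the hypothesis that the cones are closed ensures we are in the balancing setting of the tropical-fan definition. By orbit--stabilizer this gives $|X_{\sigma/\tau}| = |G_\tau|/|G_\sigma|$; the same computation, applied to the lattice automorphism $g$ fixing $\Lambda_\tau$ and sending the primitive generator pointing into $\sigma$ to the one pointing into $g(\sigma)$, shows $\overline{g}(u_{\sigma/\tau}) = u_{g(\sigma)/\tau}$ in $V/V_\tau$ (cf. Construction~\ref{normal}). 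Finally $\omega_X$ and $[\omega]_X$ are constant on each $G_\tau$-orbit, since $\omega_X$ is $G$-invariant and $|G_{g(\sigma)}| = |gG_\sigma g^{-1}| = |G_\sigma|$.

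For the implication ``tropical fan $\Rightarrow$ tropical orbit space'' I would set $\lambda_{\sigma/\tau} = 1/|X_{\sigma/\tau}| = |G_\sigma|/|G_\tau|$, which is constant on each orbit and satisfies $\sum_{\tilde\sigma \in X_{\sigma/\tau}}\lambda_{\tilde\sigma/\tau}=1$. Then
\[
  \sum_{\sigma > \tau} \lambda_{\sigma/\tau}\,[\omega]_X([\sigma])\,u_{\sigma/\tau}
  = \sum_{\sigma > \tau} \frac{|G_\sigma|}{|G_\tau|}\cdot\frac{\omega_X(\sigma)}{|G_\sigma|}\,u_{\sigma/\tau}
  = \frac{1}{|G_\tau|}\sum_{\sigma > \tau}\omega_X(\sigma)\,u_{\sigma/\tau},
\]
which lies in $V_\tau$ exactly because $(X,\omega_X)$ is balanced; hence both conditions of Definition~\ref{troporbitspace} hold. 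The finiteness requirements there are immediate: $|G_\sigma| \le |G| < \infty$, while $\#\{\sigma>\tau\}<\infty$ and the finiteness of the set of classes are already part of the data of the weighted fan and the weighted orbit space.

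For the converse I would recover balancing from the orbit-space relation by averaging over $G_\tau$. Applying each $\overline{g}$, $g\in G_\tau$, to $\sum_{\sigma>\tau}\lambda_{\sigma/\tau}[\omega]_X([\sigma])u_{\sigma/\tau}\in V_\tau$, using $\overline{g}(u_{\sigma/\tau})=u_{g(\sigma)/\tau}$ and the $G_\tau$-invariance of $[\omega]_X$, and summing over $g$, I reindex $\sigma \mapsto g(\sigma)$ so that the coefficient of $u_{\sigma/\tau}$ becomes $[\omega]_X([\sigma])\sum_{g\in G_\tau}\lambda_{g^{-1}(\sigma)/\tau}$. As $g$ runs over $G_\tau$ the argument $g^{-1}(\sigma)$ runs over the orbit $X_{\sigma/\tau}$, each element being attained $|G_\sigma|$ times by the stabilizer lemma, so this inner sum equals $|G_\sigma|\sum_{\tilde\sigma\in X_{\sigma/\tau}}\lambda_{\tilde\sigma/\tau}=|G_\sigma|$. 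Therefore
\[
  0 = \sum_{\sigma>\tau} |G_\sigma|\,[\omega]_X([\sigma])\,u_{\sigma/\tau} = \sum_{\sigma>\tau}\omega_X(\sigma)\,u_{\sigma/\tau} \in V/V_\tau,
\]
which is the balancing condition at $\tau$. Since $\tau\in X^{(n-1)}$ was arbitrary, $(X,\omega_X)$ is a tropical fan. The two points needing careful writing are the stabilizer lemma and the multiplicity count in the reindexing.
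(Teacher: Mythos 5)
Your proposal is correct and follows essentially the same route as the paper: the same choice $\lambda_{\sigma/\tau}=|G_\sigma|/|G_\tau|=1/|X_{\sigma/\tau}|$ for the forward direction, and the same averaging over $G_\tau$ with the reindexing $\sigma\mapsto g(\sigma)$ for the converse. The only difference is that you spell out the stabilizer lemma (setwise equals pointwise stabilizer, via finiteness of $G$) and the multiplicity count $|G_\sigma|$, which the paper's proof uses implicitly in the identities $|\{g\in G_\tau : g(\sigma)=\sigma\}|=|G_\sigma|$ and $\sum_{g\in G_\tau}\sum_{\sigma>\tau}\lambda_{\sigma/\tau}[\omega]_X([\sigma])\,g(u_{\sigma/\tau})=\sum_{\sigma>\tau}|G_\sigma|\,[\omega]_X([\sigma])\,u_{\sigma/\tau}$.
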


\begin{proof}
 $"\Rightarrow"$: Put $n=\dim(X)$ and let $\tau \in X^{(n-1)}$ and $\sigma>\tau$. Then we define $\lambda_{\sigma/\tau}$ $=\frac{|\{g\in G_{\tau},\text{ such that } g(\sigma)=\sigma\}|}{|G_{\tau}|}=$ $\frac{|G_{\sigma}|}{|G_{\tau}|}=$ $\frac{1}{|X_{\sigma/\tau}|}$.
Thus, for any $\tau \in X^{(n-1)}$ one has $\# \{ \sigma >\tau \} < \infty,$ and for any $\sigma > \tau$ one has $\lambda_{\sigma/\tau} \geq 0\ $  and $ \sum_{\tilde{\sigma} >\tau, \tilde{\sigma}\in X_{\sigma/\tau}}\lambda_{\tilde{\sigma}/\tau}=1$.\\
Furthermore, $\sum_{\sigma>\tau}\frac{1}{|G_{\tau}|} \omega_X(\sigma)(v_{\sigma /\tau})=t \in V_{\tau}$, because $\left( X,\omega_X \right)$ is a tropical fan. Thus, we have $\sum_{\sigma>\tau}\frac{|G_{\sigma}|}{|G_{\tau}|} [\omega_X]([\sigma])(v_{\sigma /\tau})=\sum_{\sigma>\tau}\frac{1}{|G_{\tau}|} \omega_X(\sigma)(v_{\sigma /\tau})=t \in V_{\tau}$.

$"\Leftarrow"$: Let $X/G$ be a tropical orbit space. Thus, there exists $\lambda_{\sigma/\tau}$ with $\sigma > \tau$ and $\tau \in X^{(n-1)}$ such that $\sum_{\sigma>\tau}\lambda_{\sigma/\tau} [\omega]_X([\sigma])(u_{\sigma /\tau})=t \in V_{\tau}$. Therefore, because of the linearity of $g \in G_{\tau}$, we get:
\begin{eqnarray*}
 |G_{\tau}|\cdot t&=& \sum_{g\in G_{\tau}}g(t)\\ &=& \sum_{g\in G_{\tau}} g(\sum_{\sigma>\tau}\lambda_{\sigma/\tau} [\omega]_X([\sigma])(u_{\sigma /\tau}))\\&=& \sum_{g\in G_{\tau}} \sum_{\sigma>\tau}\lambda_{\sigma/\tau} [\omega]_X([\sigma])(g(u_{\sigma /\tau}))\\&=&
 \sum_{\sigma>\tau}|G_{\sigma}|\cdot [\omega]_X([\sigma])(u_{\sigma /\tau})\\&=&
 \sum_{\sigma>\tau}\omega_X(\sigma)(u_{\sigma /\tau}).
\end{eqnarray*}
\end{proof}

\begin{example}\label{line}
 The following picture is an example of a tropical fan $X$ and a tropical orbit space $X/G$ with this fan as underlying polyhedral complex.
Let $X$ be the standard tropical line with its vertex at the origin, given by the directions $\left( \begin{array}{c} -1 \\ 0  \end{array} \right),$ $ \left( \begin{array}{c} 0\\ -1 \end{array}\right)$ and $\left( \begin{array}{c} 1\\ 1 \end{array} \right)$, and all the weights are equal to one. The group $G$ consists of two elements and is generated by the matrix $\left( \begin{array}{cc} 0 & 1\\ 1 & 0 \end{array} \right)$.
$$
\begin{minipage}{\linewidth}
\begin{center}
\input{tropical.pstex_t}
\end{center}
\end{minipage}
$$
The balancing condition for the fan is 
$$\left( \begin{array}{c} -1 \\ 0  \end{array} \right)+ \left( \begin{array}{c} 0\\ -1 \end{array}\right)+\left( \begin{array}{c} 1\\ 1 \end{array} \right)= \left( \begin{array}{c} 0\\ 0 \end{array}\right)$$
and for the orbit space
$$\frac{1}{2} \cdot \left( \begin{array}{c} -1 \\ 0  \end{array} \right)+ \frac{1}{2} \cdot \left( \begin{array}{c} 0\\ -1 \end{array}\right)+ \frac{1}{2} \cdot \left( \begin{array}{c} 1\\ 1 \end{array} \right)= \left( \begin{array}{c} 0\\ 0 \end{array} \right),$$
where the first two $(1/2)$'s come from the splitting of $1$, and the third $1/2$ comes from the invariance of the last vector under $G$.
\end{example}

\begin{corollary}\label{fancondition}
The balancing condition for tropical orbit spaces can be checked by checking the balancing condition of the underlying weighted complex. 
\end{corollary}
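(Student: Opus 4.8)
The plan is to reduce the corollary to Proposition \ref{equiv} by localising at codimension-one faces. Both balancing conditions are local in nature: the underlying weighted complex $(X,\omega_X)$ is balanced exactly when, for every $\tau\in X^{(n-1)}$, the associated open fan $F_\tau$ satisfies $\sum_{\sigma>\tau}\omega_X(\sigma)\,u_{\sigma/\tau}=0$ in $V/V_\tau$; and $X/G$ is a tropical orbit space exactly when the condition of Definition \ref{troporbitspace} holds at every such $\tau$. Hence it is enough to establish the equivalence of the two conditions one codimension-one face at a time.

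First I would fix $\tau\in X^{(n-1)}$ and pass to the local model furnished by the polyhedral complex structure, namely the weighted open fan $F_\tau$ in $\RR^{n^*_\tau}$ together with the homeomorphism $\Phi_\tau\colon S_\tau\to|F_\tau|$. Under $\Phi_\tau$ the face $\tau$ corresponds to the minimal cone of $F_\tau$, the facets $\sigma>\tau$ correspond bijectively to the facets of $F_\tau$, the weights match by the compatibility $\omega_X(\sigma)=\omega_{F_\tau}(\Phi_\tau(\sigma\cap S_\tau))$, and the primitive normal vectors $u_{\sigma/\tau}$ are carried to the corresponding normal vectors of $F_\tau$. Since every $g\in G_\tau$ fixes $\tau$ pointwise, it induces a lattice automorphism of the local space permuting the facets of $F_\tau$, so that $F_\tau/G_\tau$ becomes a weighted orbit space whose local balancing data coincide with those of $X/G$ at $\tau$; in particular the orbits $X_{\sigma/\tau}$ and the weights $[\omega]_X([\sigma])$ agree on both sides.

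Next I would invoke Proposition \ref{equiv} for the weighted fan $F_\tau$ with this induced action: it yields that $F_\tau$ is balanced as a tropical fan if and only if $F_\tau/G_\tau$ satisfies the orbit-space balancing condition. Transporting back along $\Phi_\tau$, this is precisely the equivalence of the complex-balancing of $X$ at $\tau$ with the orbit-space-balancing of $X/G$ at $\tau$. As $\tau$ was arbitrary, the two global conditions agree, which is the assertion of the corollary.

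The step that requires care is verifying that Proposition \ref{equiv} actually applies to the localised data, since two of its hypotheses are not automatic. First, the cones of $F_\tau$ may be genuinely open, so one replaces them by their closures and observes that the normal vectors, the weights, and the whole balancing expression depend only on this closed-cone information, so nothing is lost. Second, the group acting on $F_\tau$ must be taken finite: this holds because $\#\{\sigma>\tau\}<\infty$ and $|G_\sigma|<\infty$ force the $G_\tau$-orbits $X_{\sigma/\tau}$ to be finite and the relevant action of $G_\tau$ to factor through a finite group of lattice automorphisms of $F_\tau$ (an element fixing $\tau$ and stabilising each adjacent facet is already determined on the span of the star of $\tau$). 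Once this reduction is in place, the corollary follows at once from the proposition.
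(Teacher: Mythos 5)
Your overall strategy --- localise at a codimension-one face $\tau$, transport its star to the local fan $F_\tau$ via $\Phi_\tau$, and reduce to Proposition \ref{equiv} using the finiteness conditions built into Definition \ref{troporbitspace} --- is exactly the strategy of the paper, which compresses it into two lines. The one real difference is in how Proposition \ref{equiv} is used: you apply it as a black box to the localised data, which obliges you to verify its hypothesis that a \emph{finite} subgroup of $Gl(V)$ acts, whereas the paper re-runs the averaging computation from the proof of that proposition, in which only the finiteness of the set $\{\sigma>\tau\}$ and of the pointwise stabilizers $G_\sigma$ ever enters, not the finiteness of any acting group.

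This difference is where your argument has a genuine gap: the parenthetical claim that ``an element fixing $\tau$ and stabilising each adjacent facet is already determined on the span of the star of $\tau$'' is false at the level of generality of Definition \ref{troporbitspace}. Take $V=\RR^2$, let $\tau$ be the $x$-axis, $\sigma=\{(x,y)\,:\,y\ge 0\}$, and consider the integral shear $g(x,y)=(x+y,y)$: it fixes $\tau$ pointwise, maps $\sigma$ onto itself, yet acts non-trivially on the star, and it generates an infinite group all of whose non-trivial elements do the same. The definition of a tropical orbit space does not exclude this situation, since it only requires the \emph{pointwise} stabilizers $G_\sigma$ of facets to be finite (here they are trivial) and $\#\{\sigma>\tau\}<\infty$; setwise stabilizers can be infinite as soon as a facet contains an affine line, and such facets do occur (indeed the cones of $\calM_{1,n,\trop}^{\lab}(\RR^r,\Delta)$ contain the lineality directions coming from the $\RR^r$ factor). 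So the image of $G_\tau$ in the automorphisms of $F_\tau$ need not be finite, and Proposition \ref{equiv} cannot be invoked verbatim. Your reduction does become correct whenever all facets adjacent to $\tau$ are pointed closed cones, because an integral shear preserving such a cone is the identity; then the kernel of the permutation action acts trivially on the star and the $G_\tau$-action on $F_\tau$ really factors through a finite group, which is the situation in the paper's applications (there the group acts trivially, in block form, on the lineality directions). To prove the corollary as stated, however, you should do what the paper's proof indicates: repeat the computation from the proof of Proposition \ref{equiv}, summing over one representative for each element of the finite permutation group induced by $G_\tau$ on $\{\sigma\,:\,\sigma>\tau\}$, using only that every $g\in G_\tau$ fixes $V_\tau$ pointwise and satisfies $g(u_{\sigma/\tau})\equiv u_{g(\sigma)/\tau}$ modulo $V_\tau$. (To be fair, the paper's own two-line proof glosses over the same lineality subtlety --- the orbit--stabilizer bookkeeping in that computation also changes when setwise and pointwise stabilizers differ --- but its route at least never needs the finiteness claim on which your black-box application depends.)
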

\begin{proof}
  For tropical orbit spaces with infinite group $G$ there are only finitely many facets around a codim-$1$ face. Thus, as in the proof of proposition \ref{equiv} the balancing condition can be checked on the polyhedral complex as well (without group action).
\end{proof}

\begin{example}
 To show that there are tropical orbit spaces which do not come from a tropical fan we consider the following orbit space.
Let $|X|$ be the topological space $\{(x,y)\in \RR^2| y>0\}$, and let $X$ be the set of cones spanned by the vectors $\binom{x}{1}$ and $\binom{x+1}{1}$ for $x \in \ZZ$. If we define all weights to be one and $G=<\left(\begin{array}{cc} 1&1\\ 0&1\end{array}\right)>$, we get the following tropical orbit space $X/G$:

$$
\begin{minipage}{\linewidth}
\begin{center}
\input{tropicalstack.pstex_t}
\end{center}
\end{minipage}
$$
It is easily be seen, that $X/G$ is a tropical orbit space (see definition \ref{troporbitspace}), while $X$ has infinitely many cones and thus is not a fan.
\end{example}

\begin{definition}[Morphism of orbit spaces]
\label{def-morphism}
 Let $(X,$ $|X|,$ $\{\varphi\},$ $\{ \Phi_{\sigma} | \sigma \in X \})/$ $G$ and $(Y,$ $|Y|,$  $\{\psi\},$ $\{  \Psi_{\tau} | \tau \in Y)$ $/H$ be two orbit spaces. A \df{morphism of orbit spaces} $f:X/G \rightarrow Y/H$ is a pair $(f_1,f_2)$ consisting of a continuous map $f_1:|X|\rightarrow |Y|$ and a group morphism $f_2:G\rightarrow H$ with the following properties:
\begin{enumerate}
    \item \label{pma} for every general polyhedron $\sigma \in X$ there exists a general polyhedron $\widetilde{\sigma} \in Y$
    with $f_1(\sigma) \subseteq \widetilde{\sigma}$,
    \item for every pair $\sigma, \widetilde{\sigma}$ from \ref{pma} the map
    $\Psi_{\widetilde{\sigma}} \circ f_1 \circ \Phi_{\sigma}^{-1}: |F_{\sigma}^{X}| \rightarrow |F_{\widetilde{\sigma}}^{Y}|$
    induces a morphism of fans $\widetilde{F}_{\sigma}^{X} \rightarrow
    \widetilde{F}_{\widetilde{\sigma}}^{Y}$ , where $\widetilde{F}_{\sigma}^{X}$ and
    $\widetilde{F}_{\widetilde{\sigma}}^{Y}$ are the weighted general fans associated to $F_{\sigma}^{X}$ and $F_{\widetilde{\sigma}}^{Y}$,
    respectively (cf. definition \ref{defn-openfan}),
    \item there exists a refinement of $X$ such that for any $\sigma,$ $ \tilde{\sigma}$ $ \in X$ with $\dim(f_1(\sigma)$ $\cap f_1(\tilde{\sigma}))$ $=\dim(f_1(\sigma))$ $=\dim(f_1(\tilde{\sigma}))$, one has $f_1(\sigma)=f_1(\tilde{\sigma})$,
    \item $f_1(g(\sigma))$ $=f_2(g)(f_1(\sigma))$ for all $g\in G$ and $\sigma \in X$.
 \end{enumerate}

A morphism of \df{weighted orbit spaces} is a morphism of orbit spaces (i.e. there are no conditions on the weights).

\end{definition}

\begin{explanation}
Asking a morphism to fulfill conditions $a,b$ and $d$ is obvious, but to ask for condition $c$ is not. Thus, let us consider an example where condition $c$ is not fulfilled.\\
Let us consider the map $f$, given by the projection of two intervals on a third one (see the following picture). We take $G$ and $H$ to be trivial, thus $X/G=X$ and $Y/H=Y$, where $X$ is the disjoint union of two open intervals of different length and $Y$ is one open interval with the same length as the longest interval of $X$.
 $$
\begin{minipage}{\linewidth}
\begin{center}
\input{conditionc.pstex_t}
\end{center}
\end{minipage}
$$
After any possible refinement the facet $\sigma$, which is the most left in the upper interval of $X$, is open on the left side, but will be mapped on a left closed facet $\tau$. We call $\tilde{\sigma}$ the intersection of the preimage of $\tau$ with the longest interval of $X$ . Then $f_1(\sigma)\cap f_1(\tilde{\sigma})$ is a line segment as well as $f_1(\sigma)$ and $f_1(\tilde{\sigma})$, but the images are not the same which contradicts $c$. Thus $f$ is not a morphism.
\end{explanation}

\begin{example}
 If we take the tropical orbit space $X/G$ from Example \ref{line}, then the canonical map to the diagonal line in $\RR^2$ is a morphism of orbit spaces. But the homeomorphism which goes in the opposite direction is not a morphism, because locally at the origin it can not be expressed by a linear map.
\end{example}

\begin{remark}
The reason we ask condition $c$ to be fulfilled is to define images of the polyhedra later on. Thus, after refinement, each polyhedron should map to one polyhedron and the image of the polyhedral complex should be a polyhedral complex as well. In particular condition $a$ of Definition \ref{defn-prepolycomplex} has to be fullfiled. Therefore, different images of polyhedra should intersect in lower dimension than the maximal dimension of them.
Or in other words, $c$ ensures $a$ in Definition \ref{defn-prepolycomplex}.
\end{remark}

\begin{construction}
 As in the case of fans (Construction 2.24 \cite{GKM}) we can define the image orbit space. Let $X/G$ be a purely $n$-dimensional orbit space, and let $Y/H$ be any orbit space. For any morphism $X/G \rightarrow Y/H$ consider the following set:
\begin{eqnarray*}
Z & = &\{f(\sigma),\sigma \text{ is contained in a cone } \tilde{\sigma} \text{ of } X^{(n)} \text{ with $f$ is injective on } \tilde{\sigma}\}
\end{eqnarray*}

Note, that $Z$ is in general not a polyhedral complex. It satisfies all conditions of Definition \ref{defn-prepolycomplex} and Definition \ref{defn-polycomplex} except possibly $(d)$ of Definition \ref{defn-prepolycomplex} (since there might be overlaps of some regions). However, we can choose a proper refinement to turn $Z$ into a polyhedral complex. Thus, if we denote the weighted polyhedral complex defined by all representatives of all classes $[\sigma]$ with $\sigma \in Z$ by $H\circ Z$, we get an orbit space $H\circ Z/H$.

If moreover $X/G$ is a weighted orbit space we turn $f(X/G)$ into a weighted orbit space. After choosing a refinement for $X$ and $Y$ such that ${f(\sigma)}$ is a cone in $Y$ for each $\sigma \in X$, we set
 \[ \omega_{f(X/G)} (\sigma') =
         \sum_{[\sigma] \in X/G^{(n)}:[f(\sigma)]=[{\sigma}']}
           \omega_{X} (\sigma) \cdot |\Lambda'_{[\sigma']}/f(\Lambda_{[\sigma]})| \]
for any $\sigma' \in (H\circ Z)^{(n)}$.
\end{construction}

\begin {proposition} \label {prop-image}
  Let $X/G$ be an $n$-dimensional tropical orbit space, 
  $Y/H$ an orbit space, and $ f: X/G \to Y/H $ a
  morphism. Then $ f(X/G) $ is an $n-$dimensional tropical orbit space (provided that $f(X/G)$ is not empty).
\end {proposition}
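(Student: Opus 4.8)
The plan is to show that the image $f(X/G)$, with the weighted orbit space structure produced by the construction preceding the proposition, satisfies every requirement of Definition~\ref{troporbitspace}. The finiteness requirements transfer from $X/G$. Since $X/G$ has only finitely many classes and $f$ carries classes to classes (by condition (d) of Definition~\ref{def-morphism}, $f_1(g(\sigma))=f_2(g)(f_1(\sigma))$), the image has finitely many classes. By the definition of $Z$, every element is a face $f(\sigma)$ of some $f(\tilde\sigma)$ with $\tilde\sigma\in X^{(n)}$ and $f$ injective on $\tilde\sigma$; hence $\dim f(\tilde\sigma)=\dim\tilde\sigma=n$, the inclusion-maximal cones of $f(X/G)$ are exactly such $f(\tilde\sigma)$, and $f(X/G)$ is purely $n$-dimensional. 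The finiteness of the stabilizers $H_{\sigma'}$ and of the number of facets $\#\{\sigma'>\tau'\}$ around each $\tau'\in f(X/G)^{(n-1)}$ follow in the same manner, using the local linearity of $f$ and the fact that around each codimension-$1$ face of $X$ there are only finitely many facets.

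It remains to establish the balancing condition, and here I would use Corollary~\ref{fancondition}: for spaces of the kind in Definition~\ref{troporbitspace} it suffices to verify the balancing condition on the underlying weighted polyhedral complex, without reference to the group or to the coefficients $\lambda_{\sigma/\tau}$. Thus the goal becomes, for each $\tau'\in (H\circ Z)^{(n-1)}$, the relation $\sum_{\sigma'>\tau'}\omega_{f(X/G)}(\sigma')\,u_{\sigma'/\tau'}\in V_{\tau'}$. Applying the same corollary to the tropical orbit space $X/G$ gives that its underlying complex is already balanced, i.e. $\sum_{\sigma>\tau}\omega_X(\sigma)\,u_{\sigma/\tau}\in V_\tau$ for every $\tau\in X^{(n-1)}$. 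This reduction is what removes the group from the computation and lets us argue exactly as in the fan case of \cite{GKM}.

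To connect the two balancing relations, fix $\tau'\in (H\circ Z)^{(n-1)}$ and a codimension-$1$ face $\tau$ of $X$ with $f(\tau)=\tau'$ and $\dim f(\tau)=n-1$, and let $F$ be the integer-linear representative of $f$ in the charts around $\tau$ and $\tau'$. The facets $\sigma>\tau$ fall into two groups: those on which $f$ is injective, whose images $\sigma'=f(\sigma)$ are facets of $f(X/G)$ with $\sigma'>\tau'$, and those that are collapsed, for which $\dim f(\sigma)<n$ and therefore $F(u_{\sigma/\tau})\in V_{\tau'}$. Pushing the balancing relation at $\tau$ forward by $F$ and reducing modulo $V_{\tau'}$, the collapsed summands disappear, while for a non-collapsed $\sigma$ one has $F(u_{\sigma/\tau})\equiv |\Lambda'_{\sigma'}/F(\Lambda_\sigma)|\cdot u_{\sigma'/\tau'}\pmod{V_{\tau'}}$, with precisely the lattice index that enters the weight formula of the construction. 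Collecting the facets $\sigma$ according to the class of their image and summing then rebuilds $\sum_{\sigma'>\tau'}\omega_{f(X/G)}(\sigma')\,u_{\sigma'/\tau'}$, which consequently lies in $V_{\tau'}$.

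I expect the main obstacle to be the bookkeeping in this last step: one must check that the lattice index $|\Lambda'_{\sigma'}/F(\Lambda_\sigma)|$ produced by pushing forward the primitive normal vectors matches the index $|\Lambda'_{[\sigma']}/f(\Lambda_{[\sigma]})|$ used to define $\omega_{f(X/G)}(\sigma')$, and that the passage between the complex weights $\omega_X,\omega_{f(X/G)}$ and the class weights $[\omega]$ (which carry the stabilizer orders $|G_\sigma|$ and $|H_{\sigma'}|$) is consistent, so that the several codimension-$1$ faces $\tau$ of $X$ lying over $\tau'$ and the collapsed facets all contribute correctly. Invoking Corollary~\ref{fancondition} is what makes this tractable, since it spares us from constructing admissible coefficients $\lambda_{\sigma'/\tau'}$ by hand, as was necessary in the finite-group computation of Proposition~\ref{equiv}; once the balancing of the underlying complex is in place, Definition~\ref{troporbitspace} is satisfied and $f(X/G)$ is an $n$-dimensional tropical orbit space.
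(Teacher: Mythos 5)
Your proposal is correct and follows essentially the same route as the paper: the paper's proof likewise observes that the construction already yields an $n$-dimensional weighted orbit space, invokes Corollary~\ref{fancondition} to remove the group action from the balancing condition, and then defers to the fan argument of \cite{GKM}, which is exactly the push-forward computation you spell out. You have simply written out in detail what the paper cites.
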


\begin{proof}
  By construction, $f(X/G)$ is an $n-$dimensional weighted orbit space. Thus we have to prove only the balancing condition. The proof works in the same way as for fans in \cite{GKM} (Notice that by Corollary \ref{fancondition} the balancing condition can be checked without taking into account the group operation).
\end{proof}

\begin{definition}[Irreducible tropical orbit space]
 Let $X/G$ be a tropical orbit space of dimension $n$. We call $X/G$ \df{irreducible} if for any  refinement $\tilde{X}/G$ of $X/G$ and any $Y/G\subset X/G, Y\neq \emptyset$ with $\dim (Y/G)=n$ the following holds: if for all $\sigma \in Y^{(n)}$ one has $\sigma \in \tilde{X}^{(n)}$, then $Y$ and $\tilde{X}$ are equal. (The equality holds on the level of orbit spaces, the weights can be different. In the case of different weights one has $\omega_X=\lambda \cdot \omega_Y$ for $\lambda\in \QQ \neq 0$). Equivalent to this definition is to say that $X/G$ is \df{irreducible}, if for any $Y/G\subset X/G, Y\neq \emptyset$ with $\dim (Y/G)=n$ and $Y$ is closed in $X$ one has $Y=X$.
\end{definition}

\begin {corollary} \label {cor-image}
  Let $X/G$ and $Y/H$ be tropical orbit spaces of the same dimension $n$ in $ V = \Lambda
  \otimes \RR $ and $ V' = \Lambda' \otimes \RR $, respectively, and let $ f: X/G
  \to Y/H $ be a morphism. Assume that $Y/H$ is irreducible and $f(X/G)=Y/H$ as topological spaces. Then there is an orbit space $ Y_0/H $ in $V'$ of dimension smaller than $n$ with $ |Y_0| \subset |Y| $ such that
  \begin {enumerate}
  \item \label {cor-image-a}
    each point $ Q \in |Y| \backslash |Y_0| $ lies in the interior of a
    cone $ \sigma_Q' \in Y $ of dimension $n$;
  \item \label {cor-image-b}
    each point $ P \in f^{-1} (|Y| \backslash |Y_0|) $ lies in the interior
    of a cone $ \sigma_P \in X $ of dimension $n$;
  \item \label {cor-image-c}
    for $ Q \in |Y| \backslash |Y_0| $ the sum
      \[ \sum_{[P], P \in |X|: f([P])=[Q]} \mult_{[P]} f \]
    does not depend on $Q$, where the multiplicity $ \mult_{[P]} f $ of $f$ at $[P]$
    is defined to be
      \[ \mult_{[P]} f := \frac {\omega_{X/G}(\sigma_{[P]})}{\omega_{Y/H}(\sigma'_{[Q]})}
           \cdot |\Lambda'_{\sigma'_{[Q]}}/f(\Lambda_{\sigma_{[P]}})|. \]
  \end {enumerate}
\end {corollary}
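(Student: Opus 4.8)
The plan is to mirror the argument for tropical fans in \cite{GKM}, reducing the whole statement to the fact that an irreducible tropical orbit space carries a balancing weight that is unique up to a global scalar. First I would pass to compatible refinements of $X/G$ and $Y/H$ so that $f$ maps each cone of $X$ into a cone of $Y$ and so that the image $f(X/G)$, as produced in the Construction preceding Proposition \ref{prop-image}, is an honest weighted orbit space rather than a mere set $Z$. Condition (c) in the definition of a morphism is exactly what guarantees that such refinements exist and that two full-dimensional images can overlap only in dimension $<n$; here Corollary \ref{fancondition} is useful, since it lets us test balancing on the underlying complex without tracking the group action.

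Next I would construct $Y_0$. Take $|Y_0|$ to be the union of all cones of $Y$ of dimension $<n$ together with all images $f(\sigma)$ of cones $\sigma \in X$ for which $\dim f(\sigma) < n$ (the cones where $f$ drops dimension). Since $X/G$ has only finitely many classes, this is a finite union of cones of dimension at most $n-1$, hence the support of an orbit space $Y_0/H$ of dimension $<n$ with $|Y_0|\subset|Y|$. Assertion (a) is then immediate: a point $Q\notin|Y_0|$ lies on no proper face of $Y$, so it lies in the relative interior of a maximal cone $\sigma'_Q$. For (b), a point $P\in f^{-1}(|Y|\setminus|Y_0|)$ can lie neither in a cone $\sigma$ with $\dim f(\sigma)<n$ (its image would fall in $|Y_0|$) nor on a proper face (whose image lies in $|Y_0|$), so $P$ sits in the interior of a maximal cone $\sigma_P$ on which $f$ is injective with full-dimensional image. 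In particular the fibre $f^{-1}(Q)$ is finite.

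For (c) the essential input is Proposition \ref{prop-image}: $f(X/G)$ is an $n$-dimensional tropical orbit space, with weight $\omega_{f(X/G)}$ given by the Construction. Its non-zero part is a closed, full-dimensional suborbit space of $Y/H$ whose support is all of $|Y|$, because $f(X/G)=Y/H$ as topological spaces. Since $Y/H$ is irreducible, this suborbit space must coincide with $Y/H$, and the two balancing weights can differ only by a global scalar $c\in\QQ\setminus\{0\}$, i.e.\ $\omega_{f(X/G)}(\sigma')=c\cdot\omega_{Y/H}(\sigma')$ for every maximal $\sigma'\in Y$. Now fix $Q\in|Y|\setminus|Y_0|$ in the interior of $\sigma'=\sigma'_{[Q]}$. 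The classes $[P]$ with $f([P])=[Q]$ are exactly the classes $[\sigma]$ of maximal cones of $X$ with $[f(\sigma)]=[\sigma']$, each contributing the single point $P=(f|_\sigma)^{-1}(Q)$. Summing the multiplicities and inserting the defining formula for $\omega_{f(X/G)}$ gives
\[ \sum_{[P]:\, f([P])=[Q]} \mult_{[P]} f = \frac{1}{\omega_{Y/H}(\sigma')}\sum_{[\sigma]:\, [f(\sigma)]=[\sigma']}\omega_{X/G}(\sigma)\,|\Lambda'_{\sigma'}/f(\Lambda_{\sigma})| = \frac{\omega_{f(X/G)}(\sigma')}{\omega_{Y/H}(\sigma')} = c, \]
which is independent of $Q$.

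The main obstacle I anticipate is bookkeeping rather than conceptual. One must verify that the orbit-space quantities appearing in $\mult_{[P]}f$ (the class weights, the stabilizer orders hidden in $[\omega]$, and the lattice index $|\Lambda'_{\sigma'}/f(\Lambda_\sigma)|$) assemble exactly into the weight $\omega_{f(X/G)}$ of the Construction, so that the telescoping above is a genuine equality of \emph{class} weights and not merely of representative weights. A second delicate point is checking that the non-zero part of $f(X/G)$ really is a closed, full-dimensional suborbit space, so that the irreducibility of $Y/H$ applies; this is where the morphism conditions (a)--(d) together with Corollary \ref{fancondition} do the real work in choosing the refinement. Once these technical points are settled, the proportionality $\omega_{f(X/G)}=c\cdot\omega_{Y/H}$ follows directly from the definition of irreducibility, and it is the conceptual heart of the proof.
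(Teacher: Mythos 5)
Your proposal is correct and follows essentially the same route as the paper: the paper's proof likewise combines the assumption $f(X/G)=Y/H$, Proposition \ref{prop-image}, and irreducibility to conclude $f(X/G)=\lambda\cdot Y/H$ as tropical orbit spaces, and then defers the remaining steps (construction of $Y_0$, assertions (a)--(c), and the telescoping sum of multiplicities) to the fan argument in \cite{GKM}, which you have simply written out explicitly.
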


\begin{proof}
If we can show that $f(X/G)= \lambda Y/H$ (i.e. the image of $X/G$ is $Y/G$ and the weights differ by the multiplication of $\lambda \in \QQ $) the proof works as in \cite{GKM} for fans.\\
By assumption we have, that $f(X/G)=Y/H$, as orbit spaces (without weights). Further, by Proposition \ref{prop-image}, $f(X/G)$ is a tropical orbit space. Because of irreducibility we have $f(X/G)=\lambda Y/H$ as tropical orbit spaces.
\end{proof}

In contrast to the case of fans we need in the Corollary the assumption $f(X/G)=Y/H$. This is due to the fact, that we use non-closed polyhedra. Let us see what happens if we do not assume the above equality.

\begin{example}
Let $G$ be the trivial group and $X\subset \RR$ and $Y\subset \RR$ be open intervals of weight one with $X\subsetneqq Y$. Let $f:X \hookrightarrow Y$ be the inclusion.
 $$
\begin{minipage}{\linewidth}
\begin{center}
\input{openintervals.pstex_t}
\end{center}
\end{minipage}
$$
Then, all conditions of the corollary but the equality are fulfilled and the corollary does not hold.
\end{example}

\begin{definition}[Rational function]
 Let $Y/G$ be a tropical orbit space. We define a {\it rational function} $\varphi$ on $Y/G$ to be a continuous function $\varphi : |Y| \rightarrow \RR$ such that there exists a refinement $(((X, |X|, \{m_\sigma\}_{\sigma \in X}), \omega_X), \{M_\sigma\}_{\sigma \in X})$ of $Y$ fullfiling: for each face $\sigma \in X$ the map $\varphi \circ m_\sigma^{-1}$ is locally integer affine-linear. Furthermore, we demand that $f \circ g=f,$ for all $g \in G$. (Remark: by refinements we can directly assume that $f$ is affine linear on each general cone.)
\end{definition}

\begin{definition}[Orbit space divisor]\label{tropicalorbitspace}
 Let $X/G$ be a tropical orbit space, and $\phi$ a rational function on $X/G$. We define a divisor of $\phi$ to be $div(\phi)$ $=\phi\cdot X/G$ $= [(\bigcup_{i=-1}^{k-1} X^{(i)},$ $\omega_\phi)]$ $/G$, 
 where $\omega_{\phi}$ is given as follows:
\begin{eqnarray*}
        \omega_\phi : X^{(k-1)} & \rightarrow & \QQ, \\
        \tau                                             & \mapsto     & \sum_{\twolines{\sigma \in X^{(k)}}{\tau < \sigma}} \phi_\sigma(\lambda_{\sigma/\tau}\omega(\sigma) v_{\sigma / \tau}) -
    \phi_\tau\Big( \sum_{\twolines{\sigma \in X^{(k)}}{\tau < \sigma}}\lambda_{\sigma/\tau} \omega(\sigma) v_{\sigma / \tau}\Big)
    \end{eqnarray*}
\end{definition}

\begin{remark}\label{indep} 
The following two remarks can be proved analogously to the proof of Proposition \ref{equiv}.
\vspace{-0.3 cm}
\begin{description}
 \item [1] The definition above is independent of the chosen $\lambda_{\sigma/\tau}$(i.e. if we have different sets of $\lambda$'s fulfilling the definition of a tropical orbit space, the divisor will be the same for both sets of $\lambda$'s).
 \item [2] If $|G_{\sigma}|<\infty$ for all $\sigma \in X^{(n-1)}$ and the number $|\{ \sigma >\tau \}| < \infty$ for all $\tau \in X^{(n-2)}$, then $\phi\cdot X$ is a tropical orbit space.
\end{description}
\end{remark}

\section{moduli spaces of elliptic tropical curves}
In this section we show that the moduli spaces of tropical curves of genus $1$ with $j$-invariant greater than $0$ have a structure of tropical orbit space.

\begin{definition}[$n$-marked abstract tropical curves]\label{markedcurve}
 An \df{abstract tropical curve} is a pair ($\overline{\Gamma},$ $\delta $) such that $\overline{\Gamma}$ is a connected graph, and $\Gamma=\overline{\Gamma}\backslash$ \{1-valent vertices\} has a complete inner metric $\delta$ (i.e. the edges adjacent to two vertices of $\Gamma$ are isometric to a segment, the edges adjacent to one vertex of $\Gamma$ are isometric to a ray and the edges adjacent to no vertex of $\Gamma$ are isometric to a line). The edges adjacent to no or to exactly one vertex of $\Gamma$ are called unbounded, the other edges are called bounded. The unbounded edges have length infinity. The bounded edges have a finite positive length. For simplicity we denote an abstract tropical curve by $\Gamma$. An \df{$n$-marked abstract tropical curve} is a tuple ($\Gamma,x_1,...,x_n$) formed by an abstract tropical curve $\Gamma$ and distinct unbounded edges $x_1,...,x_n$ of $\Gamma$ which are rays.
Two such marked tropical curves ($\Gamma,x_1,...,x_n$) and ($\widetilde{\Gamma},\tilde{x_1},...,\tilde{x_n}$) are called {\it isomorphic} (and will from now on be identified) if there exists an isometry from $\Gamma$ to $\widetilde{\Gamma}$, mapping $x_i$ to $\tilde{x_i}, i= 1, ..., n$ ( i.e. there exists a homeomorphism $\Gamma \rightarrow \widetilde{\Gamma}$ identifying $x_i$ and $\tilde{x_i}$ and such that the edges of $\Gamma$ are mapped to edges of $\widetilde{\Gamma}$ by an affine map of slope $\pm 1$.).
\end{definition}

For a more detailed definition of an abstract tropical curve see \cite{GM3} definition 2.2. The unbounded edges are called {\it leaves} as well.

\begin{remark}
 We can parameterize each edge $E$ of a curve $\Gamma$ by an interval $[0,l(E)]$ for bounded edges and by $[0,\infty)$ or $(-\infty,\infty)$ for unbounded edges, where $l(E)$ is the length of the edge. For the choice of the direction in the bounded case we choose which vertex of $E$ is parameterized by $0$. Such a parameterization is called {\it canonical}. We do not distinguish between the unbounded edge $x_i$ and the vertex adjacent to it and call the vertex also $x_i$.
\end{remark}

\begin{definition}[$n$-marked abstract topical curves of genus $1$]
 We call an $n$-marked abstract tropical curve to be of {\it genus $1$} if the underlying graph has exactly one simple cycle. 
\end{definition}

As a tropical counterpart of the $j$-invariant, we take the length of the cycle as it was suggested in \cite{M3}, \cite{V} and \cite{KM}. Motivations for this choice can be found, for example, in \cite{KMM1}, \cite{KMM2} and \cite{Sp}.

\begin{definition}[j-invariant]
For an $n$-marked curve $\Gamma$ of genus $1$, the sum of the lengths of all edges forming the simple cycle is called the {\it j-invariant} of $\Gamma$.
\end{definition}

\begin{definition}[Combinatorial type]
 The {\it combinatorial type} of an abstract tropical curve ($\overline{\Gamma},$ $\delta $) is the graph $\overline{\Gamma}$. 
\end{definition}


\begin{remdef}\label{combinatorial}
 All curves given by Definition \ref{markedcurve} of the same combinatorial type or the combinatorial type one gets by contracting bounded edges of the graph of the combinatorial type can be embedded in a suitable $\RR^m$ by the lengths of the bounded edges and therefore this set of curves has a topological structure (called combinatorial cone). Thus, the set of all $n$-marked abstract tropical curves of genus $1$ with this induced topological structure on each combinatorial cone (the cones are glued together along faces representing the same curves) is a topological space.
\end{remdef}

\begin{definition}[abstract $\calM_{1,n}$]
The space $\calM_{1,n}$ is defined to be the topological space of all $n$-marked abstract tropical curves (modulo isomorphism) with the following properties:
\vspace{-0.5 cm}
\begin{enumerate}
 \item the curve has exactly $n$ leaves, 
 \item all vertices of the curves have valence at least $3$, and
 \item the genus of the curve is $1$.
\end{enumerate}
The topology of this space is the one defined in the previous remark and definition. 
\end{definition}

\begin{example}\label{m12}
The moduli space of $2$-marked abstract tropical curves of genus $1$ and the curves corresponding to the faces are given in the following picture:
$$
\begin{minipage}{\linewidth}
\begin{center}
\input{abstractmodulispace2.pstex_t}
\end{center}
\end{minipage}
$$
\end{example}

Now we construct a map from $\calM_{1,n}$ to a tropical orbit space in the following way. For each curve $C \in \calM_{1,n}$ let $a$ be an arbitrary point of the cycle of $C$. We define a new curve $\tilde{C}$ which we get by cutting $C$ along $a$ and inserting two leaves $A=x_{n+1}$ and $B=x_{n+2}$ at the resulting endpoints (if we cut along a vertex we have to decide if the edges adjacent to the vertex which are not in the cycle are adjacent to $A$ or to $B$). This curve is an $n+2$ marked curve (not of genus $1$) with up to $2$ two-valent vertices (at the ends $A$ and $B$).

\begin{figure}[h]
         \centerline{
          \input{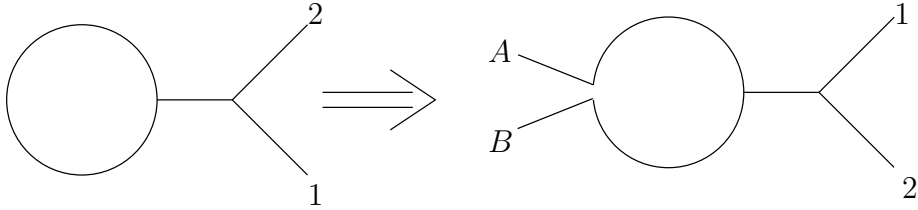}
             }
          \caption{Construction of an $n+2$-marked curve from an $n$-marked genus-$1$ curve.}
\end{figure}


Let $\mathcal{T}$ be the set of all subsets $S \subset \{1,\ldots,n+2\}$ with $|S|=2$. In order to embed $\calM_{1,n}$ into a quotient of $\RR^{\binom {n+2} 2}$ we consider the following map:
\begin{eqnarray*}
  \dist_n: \calM_{1,n} & \longrightarrow  & V_n/G_n \\
           (C,x_1,\ldots,x_n) & \longmapsto
             & [(\dist_\Gamma(x_i,x_j))_{\{i,j\}\in\mathcal{T}}]
\end{eqnarray*}
 
where $V_n,\ G_n,$ and $\dist_\Gamma(x_i,x_j)$ are defined as follows. We denote by $\dist_\Gamma(x_i,x_j)$ the distance between $x_i$ and $x_j$ (that is the sum of the lengths of all edges in the unique path from $x_i$ to $x_j$) in $\tilde{C}$, where $x_{n+1}=A$ and $x_{n+2}=B$.\\
Let $b \in \RR^t$. If we denote by $b_i$, $0<i\leq t$, the $i$th entry of $b$, then the vector space $V_n$ is isomorphic to $\RR^{\binom{n+2}{2}-n-1}$ and is given by $V_n=\RR^{\binom{n+2}{2}}/ (\Phi_n(\RR^n)+<s>)$ where

$$\begin{array}{lllll}
 \Phi_n: \RR^n & \longrightarrow & \RR^{n+2} &\longrightarrow & \RR^{\binom{n+2}{2}}\\
          b & \longmapsto & (b,0,0)=\tilde{b}& \longmapsto & (\tilde{b_i}+\tilde{b_j})_{{\{i,j\}\in\mathcal{T}}},
\end{array}$$

and $s\in \RR^{\binom{n+2}{2}}$ is a vector such that

$$
        s_{i,j} =
        \left\{ \begin{array}{lll}
            1   &   \text{if } i=n+1 \text{ or } j=n+1 \text{ and } i\neq n+2 \neq j, \\
            -1  &   \text{if } i=n+2 \text{ or } j=n+2 \text{ and } i\neq n+1 \neq j, \\
	    0   &   \text{otherwise.}
        \end{array} \right.
$$

The group $G_n$ is generated by the matrix $I$ and the matrices $M_p,\ p\in \{1,...,\ n\}$, where

$$
	I_{(i,j),(k,l)}=
	\left\{ \begin{array}{llll}
            1   &   \text{if } (\{i,j\},\{k,l\})=(\{m,n+1\},\{m,n+2\}),\ m\leq n,\\
	        &   \text{or } (\{i,j\},\{k,l\})=(\{m,n+2\},\{m,n+1\}),\ m\leq n,\\
               &    \text{or } \{ i,j\}= \{k,l\} \text{ and } i,j\notin\{n+1,n+2\},\\
		&   \text{or if } \{ i,j\}=\{n+1,n+2\}=\{k,l\},\\
	    0   &   \text{otherwise.}
        \end{array} \right.
$$ 

$$
	M_{p,(i,j),(k,l)}=
	\left\{ \begin{array}{llll}
            1   &   \text{if } \{i,j\}=\{k,l\}\\
            & \text{or } (\{i,j\},\{k,l\})=(\{p,n+2\},\{n+1,n+2\}), \\
	        &   \text{or } (\{i,j\},\{k,l\})=(\{p,j\},\{j,n+1\}),\ j\neq n+2,\\
                &   \text{or } (\{i,j\},\{k,l\})=(\{p,j\},\{p,n+2\}),\ j \neq n+2,\\
		&   \text{or } (\{i,j\},\{k,l\})=(\{p,j\},\{n+1,n+2\}),\\
		& \qquad n+1 \neq j\neq n+2,\\
	   -1   &   \text{ if } (\{i,j\},\{k,l\})=(\{p,n+1\},\{n+1,n+2\}), \\
		&   \text{or } (\{i,j\},\{k,l\})=(\{p,j\},\{j,n+2\}),\ j\neq n+1,\\
		&   \text{or } (\{i,j\},\{k,l\})=(\{p,j\},\{p,n+1\}),\ j \neq n+1,\\
	    0   &   \text{otherwise.} 
        \end{array} \right.
$$
The orbits of all elements of $<\Phi_n(\RR^n)>+<s>$ under $G_n$ are trivial and thus $V_n/G_n$ is well defined. By the following lemma, the definition of the map $\dist_n$ is well defined as well.

\begin{lemma}
 Let $\tilde{C}$ and $\tilde{C}^*$ be two curves resulting from two different cuts of a curve $C$. The images of $\tilde{C}$ and $\tilde{C}^*$ are the same in $V_n/G_n$.
\end{lemma}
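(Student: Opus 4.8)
The plan is to parametrize all possible cuts by the position of the cut point on the cycle of $C$ and to track how the distance vector changes as this point moves. Write $L$ for the length of the cycle (the $j$-invariant). Each marked leaf $x_i$, $i\le n$, meets the cycle at a foot $p_i$ sitting at some cyclic coordinate $\theta_i\in\RR/L\ZZ$, and let $d_i\ge 0$ be the length of the path from $x_i$ to $p_i$. Cutting $C$ at a point $a$ opens the cycle into a segment of length $L$ whose two endpoints carry the new leaves $A=x_{n+1}$ and $B=x_{n+2}$; in these coordinates one reads off, for the resulting curve $\tilde C$, that $\dist_{\tilde C}(x_i,A)=d_i+(\text{arc from }\theta_i\text{ to }a)$, $\dist_{\tilde C}(x_i,B)=d_i+(L-\text{that arc})$, $\dist_{\tilde C}(x_i,x_j)=d_i+d_j+(\text{arc from }\theta_i\text{ to }\theta_j\text{ avoiding }a)$, and $\dist_{\tilde C}(A,B)=L$. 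This gives an explicit vector $D(a)\in\RR^{\binom{n+2}{2}}$ whose class I must show is independent of $a$ in $V_n/G_n$.

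First I would reduce to elementary moves. Any two cut points are joined by sliding the cut once around the cycle, so it suffices to compare $D(a)$ and $D(a^{*})$ when $a$ and $a^{*}$ differ by one of three elementary steps: (i) a slide of the cut inside a single edge that crosses no foot $p_i$; (ii) a slide that crosses exactly one foot $p_p$; and (iii) the interchange of the two labels $A$ and $B$. Cases (i) and (iii) are immediate: a direct computation shows that a slide of length $t$ as in (i) adds $+t$ to every coordinate $\dist(x_i,A)$ and $-t$ to every coordinate $\dist(x_i,B)$ while leaving all other entries fixed, so $D(a^{*})-D(a)=t\cdot s$ and the class in $V_n$ (hence in $V_n/G_n$) is unchanged; and relabeling $A\leftrightarrow B$ is exactly the action of the generator $I\in G_n$ on $D(a)$.

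The substance of the lemma is case (ii). When the cut crosses the foot $p_p$, the pendant carrying $x_p$ is re-attached from the $A$-side of the segment to the $B$-side, and simultaneously every pair $\{x_p,x_j\}$ that was previously joined by the arc avoiding $a$ is now joined by the complementary arc; thus the entries $\dist(x_p,A)$, $\dist(x_p,B)$ and $\dist(x_p,x_j)$ all jump, while the remaining entries vary only through the accompanying slide. The claim is that this jump is realized by the generator $M_p$: one computes $D(a)$ and $D(a^{*})$ on both sides of the foot and checks, coordinate by coordinate, that $D(a^{*})\equiv M_p\cdot D(a)$ modulo $\Phi_n(\RR^n)+\langle s\rangle$. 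Here the slide part of the move is absorbed into $\langle s\rangle$, and the recombination of the pendant length $d_p$ produced by the re-attachment is absorbed into $\Phi_n(\RR^n)$ --- which is precisely the summand of the quotient that forgets the lengths $d_1,\dots,d_n$ of the original marked leaves.

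I expect this coordinatewise matching to be the main obstacle: the bookkeeping of which side ($A$ or $B$) each leaf and the reattached edge falls on, together with the reversal of the separated arcs, has to be reconciled exactly with the (rather involved) definition of the matrix $M_p$, and the correct multiple of $s$ and the correct element of $\Phi_n(\RR^n)$ must be identified to cancel the residual terms in the $\{x_p,A\}$, $\{x_p,B\}$ and $\{x_p,x_j\}$ coordinates. Once case (ii) is established, the conclusion follows formally: chaining the elementary moves along a path of cut points from $a$ to $a^{*}$ composes the corresponding generators into a single element $g\in G_n$ (recall that $G_n$ is generated by $I$ and the $M_p$) and accumulates a correction lying in $\Phi_n(\RR^n)+\langle s\rangle$, so that $g\cdot D(a)\equiv D(a^{*})$ in $V_n$ and therefore $[D(a)]=[D(a^{*})]$ in $V_n/G_n$, as required.
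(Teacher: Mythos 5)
Your strategy---realizing a change of cut point by generators of $G_n$ together with translations along $s$---is the same one the paper uses; the paper simply collapses your chain of elementary moves into the single closed formula $\dist(\tilde{C})=\prod_{p\in L}M_p\cdot \dist(\tilde{C}^*)+d\cdot s$, where $d$ is the length of the arc between the two cuts and $L$ is the set of \emph{all} marked points in the component between them, after first normalizing orientations with $I$ (your case (iii)). Your cases (i) and (iii) are correct, and working only modulo $\Phi_n(\RR^n)+\langle s\rangle$ rather than proving an exact identity is legitimate, since $G_n$ acts trivially on that subspace, so congruences of this kind descend to $V_n/G_n$.

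There is, however, a genuine gap in your reduction: you assume the cut can always be slid so as to cross \emph{exactly one} foot at a time. This fails whenever two or more marked leaves meet the cycle at the same point---e.g.\ when they are attached at a common vertex of the cycle, or when they lie in one pendant subtree joined to the cycle by a single edge. Then their feet coincide and cannot be separated by any choice of intermediate cut points, and a single $M_p$ does not realize the crossing: the coordinates involving \emph{every} marked leaf of that subtree jump simultaneously. (Your setup already hides this, since you write $\dist_{\tilde C}(x_i,x_j)=d_i+d_j+(\text{arc from }\theta_i\text{ to }\theta_j)$, which is wrong for two leaves in a common pendant tree.) The correct elementary move when crossing such a vertex is the product $\prod_q M_q$ over all marked leaves $q$ carried by the pendant tree(s) at that vertex; this is precisely why the paper's formula contains a product over all of $L$, and the remark following the lemma in the paper makes exactly this point for jumps over a bounded edge. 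With that correction---and with the coordinatewise verification of the crossing identity that you defer (the paper asserts its formula with the same lack of detail)---your argument goes through and coincides with the paper's proof.
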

\begin{proof}
Let us fix an orientation $o$ of the simple cycle in $C$ and let $\dist(\tilde{C})$ and $\dist(\tilde{C}^*)$ be the images under $\dist_\Gamma$ of $\tilde{C}$ and $\tilde{C}^*$. The orientation $o$ of the simple cycle in $C$ induces an orientation of the edges connecting $A$ and $B$ of $\tilde{C}$ and $\tilde{C}^*$. By applying the map $I$ to $\dist(\tilde{C})$ and $\dist(\tilde{C}^*)$ if necessary we can assume that the induced orientation goes from the $A$s to the $B$s. Let us denote by $\tilde{a},\tilde{A},\tilde{B}$ (resp. $\tilde{a}^*,\tilde{A}^*,\tilde{B}^*$) the cut and the inserted edges corresponding to curve $\tilde{C}$ (resp. $\tilde{C}^*$). We denote by $d$ the distance of $\tilde{B}$ to $\tilde{A}^*$ in the curve cut at $\tilde{a}$ and $\tilde{a}^*$. 
Let $L$ be the subset of marked points of the component containing $\overline{\tilde{B}\tilde{A}^*}$. Then the following equality holds:
$$
\dist(\tilde{C})=\prod_{p\in L}M_p\cdot \dist(\tilde{C}^*)+d\cdot s.
$$
\end{proof}

\begin{remark}
 The main idea in our definition comes from the rational case (see \cite{GKM}). After cutting the curve we get a new curve without cycles. Thus, the distance of two points in the new curve is well defined. 
Then, as in the rational case we have to mod out the image of $\Phi_n$. In addition we have to get rid of all the choices we made during the construction of $A$ and $B$. These choices can be expressed by the following three operations.
\begin{enumerate}
 \item The shift of the point $a$ on one edge of the cycle (which corresponds to the addition of an element of $<s>$).
 \item Interchanging $A$ and $B$, which corresponds to the matrix $I$.
 \item The point $a$ jumps over the vertex adjacent to an unbounded edge $p$. The matrix corresponding to this operation is $M_p$. If the point $a$ jumps over a bounded edge $E$, the matrix corresponding to this operation is the product of all matrices $M_i$ with $i$ is connected with $E$ by edges not intersecting the cycle.
\end{enumerate}
\end{remark}

To get a polyhedral complex we put
\begin{eqnarray*}
 \Psi_n: V_n & \longrightarrow & V_n/G_n\\
         x & \longmapsto & [x]
\end{eqnarray*}
and
$$X_n=\Psi_n^{-1} (\dist_n( \calM_{1,n})).\\ 
$$

As general polyhedrons we take the cones induced by the combinatorial cones in $\calM_{1,n}$, defined in Remark and Definition \ref{combinatorial}. Thus, $G_n$ is a group acting on $X_n$ and we can consider the quotient topology on the orbit space $X_n/G_n$ (see Definition \ref{arbitraryorbitspace}). To have a weighted orbit space we choose all weights to be equal to one. To show that the spaces $\calM_{1,n}$ have a structure of tropical orbit space, we have to show that $\calM_{1,n}$ and $X_n/G_n$ are homeomorphic and that $X_n/G_n$ fulfills the balancing condition.

\begin{proposition}
 Let $X_n, G_n \text{ and } \calM_{1,n}$ be as above. Then $S:\calM_{1,n}\longrightarrow X_n/G_n,$ $(C,x_1,\ldots,x_n)  \longmapsto [(\dist_\Gamma(x_i,x_j))]_{\{i,j\}\in\mathcal{T}}$ is a homeomorphism.
\end{proposition}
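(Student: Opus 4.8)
The plan is to prove that $S$ is a well-defined continuous bijection with continuous inverse, exploiting the fact that on each combinatorial cone the distance coordinates depend linearly on the lengths of the bounded edges. First I would settle well-definedness. The vector $(\dist_\Gamma(x_i,x_j))$ depends a priori on the chosen cut point $a$ and on the labelling of the two inserted leaves $A=x_{n+1}$ and $B=x_{n+2}$; but the preceding Lemma shows precisely that two cuts produce vectors differing by an element of $G_n$ (the $M_p$ for jumps over vertices and $I$ for the interchange of $A$ and $B$) together with a multiple of $s$ (the shift of $a$ along an edge). Since we pass to $V_n/G_n$ and $\langle s\rangle$ is already quotiented out in the definition of $V_n$, the class $S(C)$ is independent of these choices, and isometry invariance of distances guarantees that isomorphic marked curves have the same image. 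Surjectivity is then immediate from the construction, because $X_n=\Psi_n^{-1}(\dist_n(\calM_{1,n}))$, so every $G_n$-orbit in $X_n/G_n$ is hit.

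The core of the argument is injectivity, which I would reduce to the known rational situation. Given $S(C)=S(C')$, the vectors $\dist(\tilde{C})$ and $\dist(\tilde{C}')$ agree in $\bigl(\RR^{\binom{n+2}{2}}/(\Phi_n(\RR^n)+\langle s\rangle)\bigr)/G_n$. Applying suitable generators of $G_n$ to one of the two, and absorbing the $\langle s\rangle$-ambiguity by sliding the cut point, I may arrange that both curves are cut with the same combinatorics and the same orientation of the cut edge, so that $\tilde{C}$ and $\tilde{C}'$ become genus-$0$, $(n+2)$-marked tropical curves whose leaf-distance vectors coincide modulo $\Phi_n(\RR^n)$. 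The classical tropical reconstruction for rational curves (the tree/four-point condition underlying the embedding of $\calM_{0,n+2}$ in \cite{GKM}) then yields $\tilde{C}\cong\tilde{C}'$ as $(n+2)$-marked curves, and re-gluing $A$ to $B$ recovers $C\cong C'$. Here it is important that the $(n+1,n+2)$-coordinate is fixed by $s$ and equals the length of the cycle, so the $j$-invariant is retained in the quotient and the regluing is unambiguous.

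For the homeomorphism property I would argue cone by cone. On a fixed combinatorial cone each distance $\dist_\Gamma(x_i,x_j)$ is the sum of the lengths of the bounded edges on the unique path from $x_i$ to $x_j$, hence a linear function of the edge-length coordinates; thus $\dist_n$ is linear on that cone and, by the injectivity just obtained, an affine isomorphism onto the corresponding cone of $X_n$. Composing with the continuous quotient map $\Psi_n$ shows that $S$ is continuous and carries each cone homeomorphically onto its image, compatibly with the gluing along faces, since the general polyhedra of $X_n$ were defined to be exactly the images of the combinatorial cones of $\calM_{1,n}$. A continuous bijection that is a homeomorphism on each cone and respects the face identifications is a homeomorphism, which gives the claim.

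The main obstacle I anticipate is the injectivity step together with the bookkeeping required to match the two combinatorial cone structures: one must track carefully how the three types of cutting choices are absorbed by $I$, the matrices $M_p$, and $\langle s\rangle$, before the rational reconstruction can be invoked, and one must deal with the possibly $2$-valent vertices appearing at $A$ and $B$ after cutting. A further point needing care is the degenerate locus where the cycle length is $0$, on which there is no edge to cut; consistently with the restriction to $j$-invariant greater than $0$, this stratum must either be excluded or handled as a separate limiting case so that the cutting construction, and hence the chart provided by $\dist_n$, remains valid.
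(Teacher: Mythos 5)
Your well-definedness, surjectivity, and cone-by-cone topology arguments are fine and match the paper's level of rigor, but the core injectivity step has a genuine gap, precisely at the point you yourself flag as the ``main obstacle.'' From $S(C)=S(C')$ you only obtain an abstract relation $\dist(\tilde{C}) = g\{\dist(\tilde{C}')\} + \Phi_n(b) + c\cdot s$ for some $g \in G_n$, $b \in \RR^n$, $c \in \RR$, and you then assert that ``applying suitable generators of $G_n$'' lets you assume both curves are cut with the same combinatorics, so that the rational reconstruction of \cite{GKM} applies. This presumes that any group element relating the two distance vectors can be realized by recutting $C'$, i.e.\ the converse of the Lemma preceding the proposition; but the Lemma gives only the forward direction, and the converse fails for individual generators. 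The moves realizable by recutting $C'$ are only $I$, the $s$-shifts, and the products $\prod_{i \in T} M_i$ over entire trees $T$ attached to the cycle -- a set that depends on the combinatorics of $C'$. Concretely, let $C'$ be a cycle of length $j$ with a single vertex $v$, an edge of length $\ell$ from $v$ to a vertex $w$, and both leaves $x_1, x_2$ attached at $w$; cutting at distance $t$ from $v$ gives $\dist(\tilde{C}') = (0,\,\ell+t,\,\ell+j-t,\,\ell+t,\,\ell+j-t,\,j)$, and applying the single generator $M_1$ yields $(j,\,2\ell,\,2j-2t,\,\ell+t,\,\ell+j-t,\,j)$, which for $\ell$ large compared to $j$ violates the four-point condition (the three sums are $2j$, $3\ell+j-t$, $2j+\ell-t$, with the maximum attained only once), hence is not congruent modulo $\Phi_{n+2}(\RR^{n+2}) \supset \Phi_n(\RR^n)+\langle s\rangle$ to the distance vector of any cut curve at all; the realizable move here is $M_1M_2$, not $M_1$. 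So an arbitrary $g$ cannot be decomposed into recutting moves; to constrain $g$ you would have to use the hypothesis that $g\{\dist(\tilde{C}')\}$ is again (congruent to) a curve vector, namely $\dist(\tilde{C})$, and extracting ``the cuts can be aligned'' from that hypothesis is exactly the content of injectivity. As written, the argument is circular at its crux.

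The paper avoids this issue by never trying to align cuts: from an arbitrary representative it extracts quantities invariant under $G_n$ and under $\Phi_n(\RR^n)+\langle s \rangle$ -- the cycle length $j = x_{n+1,n+2}$, the distances $d_i = (x_{i,n+1}+x_{i,n+2}-j)/2$ of the marks to the cycle, the mutual positions $d_{i,k}$ of the attachment points on the cycle, and, when several marks attach at a common point of the cycle, the $G_n$-fixed coordinates $x_{i,k}$ recovered by projecting to the fixed part of the cone -- and checks that these data determine the curve. If you want to keep your reduction to the rational case, you would need to supply the missing combinatorial lemma: whenever $g \in G_n$ maps the distance vector of a cut curve to something congruent modulo $\Phi_n(\RR^n)+\langle s\rangle$ to the distance vector of another cut curve, then on that vector $g$ agrees, up to such translations, with a product of realizable recutting moves. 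That lemma is of essentially the same difficulty as the paper's direct reconstruction, so the reduction buys little.
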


\begin{proof}
 Surjectivity is clear from the definition, and $S$ is a continuous closed map. Thus, it remains to show that $S$ is injective. To show this, we prove that out of each representative of an element $[x]$ in the target we can construct some numbers which are the same for each representative of $[x]$. If these numbers determine a unique preimage, the injectivity follows. For this we take the following numbers which are independent of the representative:


$j=x_{n+1,n+2}=$ length of the circle, 

$d_i=(x_{i,n+1}+x_{i,n+2}-j)/2=$ distance from $i$ to the circle,

$d_{i,k}=|(x_{i,n+1}+x_{k,n+2})-d_i-d_k-j|=$ distance of $i$ and $k$ on the circle. 

If there are more than three marked edges $i_1,...i_r$ with $d_{i_s,i_t}$ equals $0$ or $j$, than we have to determine the distances these edges have one to each other. But, since these distances do not depend on the cycle, the edges in $X_n$ encoding these distances are invariant under $G_n$. Thus, we can reconstruct these distances, by considering the projection (not necessarily orthogonal) of $[x]$ to the fixed part of the cone (and thus the fixed part of each representative) in which $[x]$ lies. 
Thus, all distances are given, injectivity follows and we are done.
\end{proof}

\begin{proposition}
 The weighted orbit space $X_n/G_n$ is a tropical orbit space.
\end{proposition}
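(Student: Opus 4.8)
The plan is to check the four requirements in Definition \ref{troporbitspace} for $\left(X_n,\omega_{X_n}\right)/G_n$ (all weights equal to $1$), and to reduce the balancing condition to a purely local computation on the underlying complex $X_n$ by invoking Corollary \ref{fancondition}. The finiteness requirements are essentially bookkeeping; the real content is the balancing condition, and within it the codimension-one degenerations that involve the cycle.

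First I would dispose of the administrative conditions. The cones of $X_n/G_n$ are indexed by the combinatorial types of $n$-marked genus-$1$ curves with $j>0$, of which there are only finitely many, so $X_n/G_n$ has finitely many classes. A facet $\sigma\in X_n^{(n)}$ is a trivalent type, and the two identities $E_b-V+1=1$ and $3V=2E_b+n$ (genus $1$, all vertices trivalent, $n$ leaves) give $V=E_b=n$, so $\dim X_n=n$ and the facet is coordinatized by its $n$ bounded-edge lengths. The pointwise stabilizer $G_\sigma$ is carried by the finite automorphism group of the underlying graph, whence $|G_\sigma|<\infty$. Finally a cone $\tau\in X_n^{(n-1)}$ is a type with exactly one $4$-valent vertex and all others trivalent, and only finitely many trivalent types arise by resolving it, so $\#\{\sigma>\tau\}<\infty$.

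By Corollary \ref{fancondition} it then suffices to verify $\sum_{\sigma>\tau}u_{\sigma/\tau}\in V_\tau$ on the complex $X_n$ (ignoring $G_n$) for each such $\tau$, where the normal vectors are read off as increments of the distance coordinates $\left(\dist_\Gamma(x_i,x_j)\right)$. I would split into cases according to whether the $4$-valent vertex $w$ lies on the cycle. If $w$ is off the cycle, the whole cycle, hence both inserted ends $A=x_{n+1}$ and $B=x_{n+2}$, lies in a single branch at $w$; thus $A$ and $B$ always end up on the same side of every resolution of $w$. The three resolution vectors are then exactly those of a $4$-valent vertex in the rational moduli space $\calM_{0,n+2}$, and their balancing relation is the one established in \cite{GKM}; because $A$ and $B$ stay together, the extra identification by $\langle s\rangle$ in $V_n$ does not disturb it, so the sum lands in $V_\tau$.

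The hard part will be the case in which $w$ lies on the cycle. Here two of the four germs at $w$ are cycle directions and two are tree directions (the loop configuration, where both cycle germs come from a single loop edge, being a degenerate sub-case), and I would enumerate the three resolutions: one, $\sigma_0$, joins the two cycle germs at a new vertex and keeps the connecting edge off the cycle, while $\sigma_1$ and $\sigma_2$ pair each cycle germ with a tree germ, so the new edge becomes part of the cycle and lengthens $j$. Fixing one cut so that all distances are well defined, I would write out $u_{\sigma_0/\tau}$, $u_{\sigma_1/\tau}$, $u_{\sigma_2/\tau}$ explicitly as coordinate increments and show that $u_{\sigma_0/\tau}+u_{\sigma_1/\tau}+u_{\sigma_2/\tau}$ lies in $V_\tau$ modulo $\Phi_n(\RR^n)+\langle s\rangle$; I expect the two cycle-lengthening vectors to combine with the tree-pushing vector into a vector tangent to $\tau$ plus a multiple of $s$, the latter vanishing in $V_n$ and thereby playing the role that the $A\leftrightarrow B$ symmetry played above. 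The hypothesis $j>0$ enters precisely here: since the degeneration $j\to 0$ is excluded (the cones are open in that direction), no further facet appears and the list of three resolutions is complete, so the balancing sum is the full one.
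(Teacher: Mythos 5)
Your proposal is correct and follows essentially the same route as the paper: the same case analysis on the position of the unique $4$-valent vertex relative to the cycle, the same reduction of the off-cycle case to the rational balancing of \cite{GKM}, and the same explicit three-term check for the resolutions when the vertex sits on the cycle. The differences are organizational. First, you handle the group via Corollary \ref{fancondition}, reducing to the plain balancing $\sum_{\sigma>\tau}u_{\sigma/\tau}\in V_\tau$ of the underlying complex with unit weights, whereas the paper verifies Definition \ref{troporbitspace} directly, exhibiting $\lambda_{\sigma_1/\tau}=\lambda_{\sigma_2/\tau}=\tfrac12$ on the $G_\tau$-orbit formed by the two cycle-lengthening resolutions (they are interchanged by $I$) and using the stabilizer-halved weight $[\omega]([\sigma_3])=\tfrac12$ on the $\{A,B\}$-resolution; by Proposition \ref{equiv} the two checks are equivalent, and your shortcut is legitimate and spares you from producing the $\lambda$'s. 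Second, when the cycle has several vertices and the $4$-valent vertex lies on it, the paper dispatches this immediately (trivial stabilizers, ``same case as $\calM_{0,n}$''), while you fold it into your hand computation; both cover all codimension-one types. One caution: your decisive loop-case computation is only announced, not performed. The paper does it: in the coordinates $(d(x_1,x_2),d(x_1,A),d(x_1,B),d(x_2,A),d(x_2,B),d(A,B))$ the three normal vectors are $(1,0,1,1,0,1)$, $(1,1,0,0,1,1)$, $(0,1,1,1,1,0)$, and their (weighted) sum is a multiple of $(1,1,1,1,1,1)$. Note that this vector has nonzero $d(x_1,x_2)$-coordinate while every element of $V_\tau+\langle s\rangle$ has vanishing $d(x_1,x_2)$-coordinate, so a genuine $\Phi_n(\RR^n)$-correction is required; your earlier formulation ``in $V_\tau$ modulo $\Phi_n(\RR^n)+\langle s\rangle$'' is the right one, and the later phrase ``tangent to $\tau$ plus a multiple of $s$'' would not suffice. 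Finally, the role of $j>0$ is not to make the list of three resolutions complete (that holds anyway), but to ensure that the $j=0$ locus contributes no codimension-one cones of $X_n$ at which balancing would have to be checked.
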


\begin{proof}
To show the balancing condition we have to consider the codim-$1$ cones and the facets adjacent to them. If there is more than one vertex on the circle of a curve corresponding to a point on a facet, then the stabilizer is trivial and we are in the same case as for the $\calM_{0,n}$. If there is only one vertex on the cycle we have the stabilizer $\{I,1\}$, the identity and $I$ (see above). The curves corresponding to the points in the interior of the codim-$1$ face have exactly one $4$-valent vertex. This vertex can be adjacent to the circle or not. Let us consider these two cases separately. The second case is trivial (the stabilizers are the same for all three facets and the balancing condition is the same as for $\calM_{0,n}$), thus let us assume, that the $4$-valent vertex is at the circle. Qualitatively, the codim-$1$ face, which we call $\tau$, corresponds to a curve as in the following picture:
\begin{figure}[h]
         \centerline{
          \input{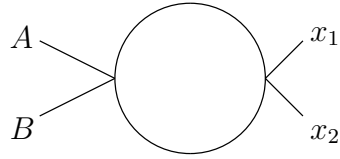}
             }
          \caption{A tropical curve with 4-valent vertex.}
\end{figure}

By assumption, there is only one vertex on the cycle. We only consider the case with two ends $x_1$ and $x_2$, because if we have a tree instead of $x_i$ the calculation is the same for each leaf of the tree.
To verify the balancing condition for tropical orbit spaces given in definition \ref{tropicalorbitspace}, we have to consider the three facets around the face $\tau$.
Let $\sigma_1$ (resp. $\sigma_2$) belong to the insertion of the edge with $A$ and $x_1$ (resp. $A$ and $x_2$) on the same side. Then, $\sigma_1$ and $\sigma_2$ lie in the same $G_{\tau}$-orbit. Thus, if we take the same notation as in the picture we get the following condition:\\
There exists $\lambda_1,\lambda_2 \geq 0,\ \lambda_{\sigma_1/ \tau}+\lambda_{\sigma_2/ \tau}=1$ such that
$$
\lambda_{\sigma_1/ \tau} \cdot \left( \begin{array}{c} 1 \\0 \\1 \\1 \\0 \\1 \end{array} \right)+
\lambda_{\sigma_2/ \tau} \cdot \left( \begin{array}{c} 1 \\1 \\0 \\0 \\1 \\1 \end{array} \right)+
\frac{1}{2}\cdot \left( \begin{array}{c} 0 \\1 \\1 \\1 \\1 \\0 \end{array} \right)
\begin{array}{c} d(x_1,x_2) \\ d(x_1,A) \\ d(x_1,B) \\ d(x_2,A) \\ d(x_2,B) \\ d(A,B) \end{array},\ \in V_{\tau}.
$$ 
This condition is fullfiled for $\lambda_{\sigma_1/ \tau}=\lambda_{\sigma_2/ \tau}= \frac{1}{2}$. Thus we have indeed a tropical orbit space.
\end{proof}

\begin{remark}
 In Example \ref{m12} we have seen the topological picture of the moduli space $\calM_{1,n}$.
 Unfortunately it is not possible to give a picture of the corresponding polyhedral complex since $X_2$ has infinitely many cones. But let us see a description of it. Let the vector entries be labeled as in the previous proof and let $C_1,C_2,C_3,C_4$ be the cones corresponding to the four different combinatorial cones in the picture of Example \ref{m12}, where $C_1$ is the left, $C_2$ the second left, $C_3$ the third left and $C_4$ the right combinatorial type. The group and represantantives of the cones $C_1,C_2,C_3,C_4$ (labeled by the same name) are the following:
$$
G=\left<\left( \begin{array}{cccccc} 1&0&0&0&0&0 \\0&0&1&0&0&0 \\0&1&0&0&0&0 \\0&0&0&0&1&0 \\0&0&0&1&0&0 \\0&0&0&0&0&1 \end{array} \right)\right.,\left. \left( \begin{array}{cccccc} 1&1&-1&-1&1&1 \\0&1&0&0&0&1 \\0&0&1&0&0&-1 \\0&0&0&1&0&0 \\0&0&0&0&1&0 \\0&0&0&0&0&1 \end{array} \right) \right>,\
$$
$$
C_1=\{a\cdot \left( \begin{array}{c} 1 \\1 \\1 \\2 \\0 \\2 \end{array}\right)|b>0\},\
C_2=\{a\cdot \left( \begin{array}{c} 1 \\1 \\1 \\2 \\0 \\2 \end{array}\right)+b\cdot \left( \begin{array}{c} 0 \\1 \\0 \\1 \\0 \\1 \end{array}\right)| a,b\in \RR_{\geq 0}, a+b>0\},
$$
$$ 
C_3=\{b\cdot \left( \begin{array}{c} 0 \\1 \\0 \\1 \\0 \\1 \end{array}\right)|b>0\},\
C_4=\{a\cdot \left( \begin{array}{c} 0 \\1 \\1 \\1 \\1 \\0 \end{array}\right)+b\cdot \left( \begin{array}{c} 0 \\1 \\0 \\1 \\0 \\1 \end{array}\right)| a,b\in \RR_{\geq 0}, b>0\}.
$$
All other cones of the underlying polyhedral complex are given by $g\{C_i\}$ for $g\in G$ and $i\in \{1,2,3,4\}$.
\end{remark}

Now we define a tropical orbit space corresponding to the parameterized genus-1 curves in $\RR^2$.

\begin{definition}[Tropical $ \widetilde{\calM}_{1,n}^{\lab}(\RR^r,\Delta) $]
    \label{def-m1nprd}
  A {\it parameterized labeled $n$-marked tropical curve of genus 1 in} $ \RR^r $ is a
  tuple $ (\Gamma,$ $x_1,$ $\dots,$ $x_N,$ $h) $, where $N \ge n$ is an integer,$
  (\Gamma,$ $x_1,$ $\dots,$ $x_N) $ is an abstract $N$-marked tropical curve of genus $1$, and $ h:
  \Gamma \to \RR^r $ is a continuous map satisfying the following conditions.
  \begin {enumerate}
  \item On each edge of $ \Gamma $ the map $h$ is of the form $ h(t) = a + t
    \cdot v $ for some $ a \in \RR^r $ and $ v \in \ZZ^r $. The integral
    vector $v$ occurring in this equation if we pick for $E$ the canonical
    parameterization starting at $V\in \partial E$ 
    is denoted $ v(E,V) $ and is called the \df {direction} of $E$ (at
    $V$). If $E$ is an unbounded edge and $V$ is its only boundary point we
    write $ v(E) $ instead of $ v(E,V) $ for simplicity.
  \item For every vertex $V$ of $ \Gamma $ we have the \df {balancing
    condition}
      \[ \sum_{E| V \in \partial E} v(E,V) = 0. \]
  \item $ v(x_i)=0 $ for $ i=1,\dots,n $ (i.e.\ each of the first $n$ leaves is
    contracted by $h$), whereas $ v(x_i) \neq 0 $ for $ i>n $ (i.e.\ the
    remaining $ N-n $ ends are ``non-contracted ends'').
  \end {enumerate}
  Two labeled $n$-marked tropical curves $ (\Gamma,x_1,\dots,x_N,h) $ and $
  (\tilde \Gamma, \tilde x_1,\dots,\tilde x_N,\tilde h)$ in $ \RR^r $ are
  called isomorphic (and will from now on be identified) if there is an
  isomorphism $\varphi: (\Gamma,x_1,\dots,x_N) \to (\tilde \Gamma,\tilde x_1,
  \dots,\tilde x_N) $ of the underlying abstract curves such that $ \tilde h
  \circ \varphi = h $.

 Let $m=N-n$. The \df {degree} of a labeled $n$-marked tropical curve $\Gamma$ of genus $1$ as above is defined
  to be the $m$-tuple $ \Delta = (v(x_{n+1}),\dots,v(x_N)) \in (\ZZ^r
  \backslash \{0\})^m$ of directions of its non-contracted ends. The \df
  {combinatorial type} of $\Gamma$ is given by the data of the combinatorial type of the
  underlying abstract marked tropical curve $ (\Gamma,x_1,\dots,x_N) $
  together with the directions of all its (bounded and unbounded)
  edges. From now on, the number $N$ will always be related to $n$
  and $ \Delta $ by $ N=n+\#\Delta $ and thus will denote the total number of
  (contracted or non-contracted) ends of an $n$-marked curve of genus $1$ in $ \RR^r $ of
  degree $ \Delta $.
  
  The space (of the isomorphism classes) of all labeled parameterized $n$-marked tropical curves of genus $1$ of a given degree
  $\Delta$ in $\RR^r$, such that all vertices have valence at least $3$ will be denoted $ \widetilde{\calM}_{1,n}^{\lab}(\mathbb{R}^r,
  \Delta) $. For the special choice
    \[ \Delta = (-e_0,\dots,-e_0\;\;,\dots,\;\;-e_r,\dots,-e_r) \]
  with $ e_0 = -e_1-\cdots-e_r $ and where each $ e_i $ occurs
  exactly $d$ times, we will also denote this space by $ \widetilde{\calM}_{1,n}^{
  \lab}(\mathbb{R}^r,d) $ and say that these curves have degree $d$.
\end{definition}

In the case of rational curves we can simply take  $\widetilde{\calM}_{0,n}^{\lab}(\mathbb{R}^r,
  \Delta)= \calM_{0,N}^{\lab} \times \RR^r $ because to build the moduli spaces of rational curves in $\RR^r$ it suffices to fix the coordinate of one of the marked ends (for example $x_1$). For the case of genus-1 curves the situation is more complicated. If we fix the combinatorial type of the curve, the cycle imposes some conditions on the lengths. In order to get a closed cycle in the image the direction vectors of the cycle edges multiplied by their lengths have to sum up to zero. Further we have to get rid of cells which are of higher dimension than expected. We will see that these conditions (closing of the cycle and getting rid of higher dimensional cells) can be expressed by some rational functions.

Let $\tilde{X}_{n,\Delta,r}^{\lab}= X_{N} \times \RR^r \times \ZZ^r$. We define $G_N^{\lab}$ to be as group the same as $G_N$, acting on $X_{N}$ as $G_N$ before, on $b\in \RR^r$ (that is the image of $x_1$) as identity and on $v\in \ZZ^r$ as follows:
$$
I(v)=-v,
M_{p}(v)=v-v(p).
$$
As topology on $\tilde{X}_{n,\Delta,r}^{\lab}$, we take the product topology of $X_N$, $\ZZ^r$ and $\RR^r$, where we consider $\ZZ^r$ with the discrete topology and $\RR^r$ with the standard Euclidean topology.

As mentioned above the direction vectors of the cycle multiplied by the lengths have to sum up to zero. To ensure that the sum is indeed $0$ we use divisors. Therefore, we need rational functions as defined in the second chapter. The purpose of these rational functions is to make sure that the $i$th coordinate of $A$ is mapped to the $i$th coordinate of $B$.

\begin{proposition}\label{rationalmap}
For all $0<i\leq r$, we have a function
\begin{eqnarray*}
 \phi_i: \tilde{X}_{n,\Delta,r}^{\lab} & \to &\RR\\
 (a _{\{1,2\}},\dots, a_{\{N+1,N+2\}},b,v)  & \longmapsto & \frac{1}{2} \cdot \max \{\pm( \frac{1}{2}(\sum_{k=2 }^{N} \left(a_{\{1,k\}}-a_{\{k,N+1\}}\right)v_k(i)\\
& &+\left(a_{\{1,N+2\}}-a_{\{N+1,N+2\}}\right)(-v(i))\\
& &+(a_{\{1,N+1\}}) v(i))\\
& &-\frac{1}{2}(\sum_{k=2 }^{N} \left(a_{\{1,k\}}-a_{\{k,N+2\}}\right)\cdot v_k(i)\\
& & +\left(a_{\{1,N+1\}}- a_{\{N+1,N+2\}}\right)v(i)\\
& &+(a_{\{1,N+2\}})\cdot (-v(i))))\}
\end{eqnarray*}
which is rational and invariant under $G_N^{\lab}$.
\end{proposition}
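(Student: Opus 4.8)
The plan is to recognise the linear form inside the outer maximum as the obstruction to closing the cycle in the $i$-th coordinate, after which all assertions become transparent. Write $S_1$ and $S_2$ for the two bracketed sums appearing in the formula, so that the argument of the outer maximum is $\pm E$ with $E=\frac12(S_1-S_2)$, and hence
\[ \phi_i=\frac12\max\{E,-E\}=\frac12\,|E|. \]
The key claim I would establish first is the reconstruction formula $\frac12 S_1=h(A)(i)-h(x_1)(i)$ and $\frac12 S_2=h(B)(i)-h(x_1)(i)$, where $A=x_{N+1}$ and $B=x_{N+2}$ are the two leaves inserted at the cut. Granting this, $E=h(A)(i)-h(B)(i)$ is exactly the total displacement in the $i$-th coordinate accumulated on going once around the cycle, and $\phi_i=\frac12|h(A)(i)-h(B)(i)|$.

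To prove the reconstruction formula I would work on the cut tree $\tilde C$, which is a rational curve with ends $x_1,\dots,x_N,A,B$, and fix $h(x_1)$. On the geodesic from $x_1$ to $A$ the velocity of $h$ on a bounded edge equals the sum of the directions $v(x_m)$ of the ends lying on the far side (balancing condition), so $h(A)(i)-h(x_1)(i)$ is obtained by summing $v_m(i)$ weighted by the length of the initial segment of the path on which $x_m$ still lies on the $A$-side. That length is the tropical Gromov product $\frac12\big(a_{\{1,m\}}+a_{\{1,N+1\}}-a_{\{m,N+1\}}\big)$, i.e.\ the point where $x_m$ branches off the $x_1$-to-$A$ path. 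Carrying out the sum over $m=2,\dots,N$ together with the contributions of $A$ (direction $v$) and $B$ (direction $-v$) reproduces $\frac12 S_1$; here the spurious term $a_{\{1,N+1\}}\sum_m v_m(i)$ vanishes because $x_1$ is contracted, $v(x_1)=0$, and the degree/balancing condition gives $\sum_{m=1}^{N}v(x_m)+v+(-v)=0$, so $\sum_{m=2}^{N}v_m(i)=0$. The computation for $S_2$ is identical with $A$ and $B$ interchanged.

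Rationality then follows quickly. Since $E$ is a sum of edge lengths weighted by integer combinations of the $v_m(i)$ and $v(i)$, it is integer-linear in the length coordinates on each combinatorial cone, so $\phi_i=\frac12|E|$ is continuous and piecewise affine-linear, with the non-linearity locus exactly $\{E=0\}$; the factor $\frac12$ is what makes the eventual divisor $\phi_i\cdot\tilde X^{\lab}_{n,\Delta,r}$ have the correct multiplicity. The function does not involve $b=h(x_1)$ (it cancels in the difference), as required. To see that $E$ is well defined on $V_N$ I would check directly that $S_1$ and $S_2$ are invariant under the translations $\Phi_N(\RR^N)$ (each $b'_k$ with $k\le N$ cancels, and the surviving $b'_1$ is multiplied by $\sum_{k=2}^N v_k(i)=0$), and that $S_1-S_2$ is invariant under adding multiples of $s$ (each of $S_1,S_2$ shifts by $2c\,v(i)$, so the difference is unchanged); geometrically both are instances of the cycle holonomy being insensitive to these choices.

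For $G_N^{\lab}$-invariance I would treat the generators. Applying $I$ swaps $a_{\{m,N+1\}}\leftrightarrow a_{\{m,N+2\}}$ and sends $v\mapsto -v$, which interchanges $S_1$ and $S_2$ and hence sends $E\mapsto -E$; since $\phi_i=\frac12|E|$, it is preserved, and this is precisely why the outer $\max\{\pm\cdots\}$ (rather than a bare linear form) is needed. For $M_p$, which sends $v\mapsto v-v(p)$, the geometric meaning is that the cut point is moved past the vertex carrying the end $x_p$; since $E$ equals the holonomy of $h$ around the cycle, it is independent of where the cycle is cut, so $\phi_i$ is again unchanged. I expect the main obstacle to be making this last point fully rigorous: either one pushes the explicit $\binom{N+2}{2}\times\binom{N+2}{2}$ matrix $M_p$ through $S_1$ and $S_2$ and verifies the bookkeeping that $E$ is fixed, or one invokes the earlier lemma (two cuts differ by a product $\prod_{p\in L}M_p$ together with a multiple of $s$) and the already-verified $s$-invariance to reduce re-cutting to the generators. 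I would favour the holonomy argument, using the lemma to avoid the tedious direct matrix computation.
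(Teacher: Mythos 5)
Your reading of the formula is the right one, and it matches the motivation the paper itself gives: writing $E=\frac12(S_1-S_2)$, one has $\phi_i=\frac12|E|$, and for an honest cut curve the Gromov--product computation indeed yields $\frac12 S_1=h(A)(i)-h(x_1)(i)$ and $\frac12 S_2=h(B)(i)-h(x_1)(i)$, so that $E$ is the failure of the cycle to close in the $i$-th coordinate. Your verifications of well-definedness modulo $\Phi_N(\RR^N)$ and $\langle s\rangle$ (each of $S_1,S_2$ shifts by $2c\,v(i)$, using $\sum_{k=2}^N v_k(i)=0$), and of $I$-invariance ($I$ interchanges $S_1$ and $S_2$, so $E\mapsto -E$ and the outer $\max\{\pm\,\cdot\,\}$ absorbs the sign) are correct, and in substance the same as the paper's own computations for $s$ and $I$.

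The gap is the $M_p$-invariance, which is the bulk of the paper's proof, and your favoured route for it does not work. The holonomy argument certifies $|E(g\cdot x)|=|E(x)|$ only when (i) $x$ is itself the distance vector of an actual cut of some curve $C$, and (ii) $g$ is the group element induced by passing to another cut of that same $C$. Neither is available in general. First, $\tilde{X}_{n,\Delta,r}^{\lab}$ is by construction a preimage under the quotient map $\Psi_N$, hence a union of full $G_N$-orbits; most of its points are only group translates of cut vectors, with no curve and no map $h$ attached to them except through the very invariance you are trying to prove. Second, and more seriously, an individual generator $M_p$ is induced by a re-cut only when the end $x_p$ is attached directly to the cycle at the vertex next to the cut; if $x_p$ sits in a subtree containing other marked ends $L\ni p$, the only re-cut element available is the product $\prod_{q\in L}M_q$ (plus an $s$-shift) --- this is exactly what the Lemma you invoke says. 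So that Lemma goes in the wrong direction for your purposes: it expresses re-cuts as products of generators, whereas invariance under $G_N^{\lab}$ must be checked on each generator separately, and invariance under a product, verified only at special points, does not descend to its factors. Linearity of $E$ and of $E\circ M_p$ on each cone does not repair this, because the cones on which $M_p$ has a re-cut interpretation do not span the cones on which it has none. Consequently the statement you need, $\phi_i\circ M_p=\phi_i$ on all of $\tilde{X}_{n,\Delta,r}^{\lab}$, is exactly the ``tedious bookkeeping'' you set aside: one must push $M_p$ through $S_1-S_2$ and check the formal cancellation, as the paper does in the two cases $p\neq 1$ and $p=1$ (the latter again using $\sum_{k=2}^N v_k(i)=0$). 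That computation, not the holonomy picture, is the actual content of the proposition.
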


\begin{proof}
The only thing to do is to show that this is indeed a function, i.e. it is well defined. For this we have to show, that $\phi_i$ is invariant under the addition of $c\cdot (s,0,0)$ (we identify $(s, 0, 0)$ with $s$) for $c\in \RR$ and the actions of $I$ and $M_p$. Let $x\in \tilde{X}_{n,\Delta,r}^{\lab}$ and $d=\phi_i(x)$.
 
 Let $c \in \RR$. Then, the value of $c\cdot s+x$ under $\phi_i$ is $d\pm \sum_{k=2}^N \left( -c\right) \cdot v_k(i)$. The second part ($\sum_{k=2}^N \left( -c\right) \cdot v_k(i)$) is $0$ due to the balancing condition, thus the value of $x$ and $c\cdot s+x$ is the same as before.

For $I$ we get the same, because
\begin{eqnarray*}
 & &\phi_i(I(a_{\{1,2\}},\dots, a_{\{N+1,N+2\}},b,v))\\ &=&\frac{1}{2} \cdot \max \{\pm( \frac{1}{2}(\sum_{k=2 }^{N} \left(a_{\{1,k\}}-a_{\{k,N+2\}}\right)v_k(i)\\
& &+\left(a_{\{1,N+1\}}-a_{\{N+1,N+2\}}\right)(-(-v(i)))+(a_{\{1,N+2\}})\cdot -v(i))\\
& & -\frac{1}{2}(\sum_{k=2 }^{N} \left(a_{\{1,k\}}-a_{\{k,N+1\}}\right)v_k(i)\\
& & +\left(a_{\{1,N+2\}}- a_{\{N+1,N+2\}}\right)(-v(i))+(a_{\{1,N+1\}})\cdot (-(-v(i)))))\}\\
&=& \frac{1}{2} \cdot \max \{\pm(-(\frac{1}{2}(\sum_{k=2 }^{N} \left(a_{\{1,k\}}-a_{\{k,N+1\}}\right)v_k(i)\\
& &+\left(a_{\{1,N+2\}}-a_{\{N+1,N+2\}}\right)(-v(i))+(a_{\{1,N+1\}})\cdot v(i))\\
& & -\frac{1}{2}(\sum_{k=2 }^{N} \left(a_{\{1,k\}}-a_{\{k,N+2\}}\right)v_k(i)\\
& & +\left(a_{\{1,N+1\}}- a_{\{N+1,N+2\}}\right)v(i)+(a_{\{1,N+2\}})\cdot (-v(i)))))\}\\
&=&\phi_i(a_{\{1,2\}},\dots, a_{\{N+1,N+2\}},b,v).
\end{eqnarray*}

 It remains to show the invariance with respect to $M_p$. Let us consider first the case $p \neq 1$.\\
We get:
 $$d\pm\frac{1}{4}\left(\left( (a_{\{1,N+1\}}+a_{\{p,N+2\}}+a_{\{N+1,N+2\}}-a_{\{1,N+2\}}-a_{\{p,N+1}\})\right.\right.$$
$$\left. +(a_{\{N+1,N+2\}}) \right) \cdot v_p(i) 
+\left(a_{\{1,N+2\}}-a_{\{N+1,N+2\}}\right)(v_p(i))+(a_{\{1,N+1\}})$$
$$\cdot (-v_p(i)) -\left( (a_{\{1,N+1\}}+a_{\{p,N+2\}}+a_{\{N+1,N+2\}}-a_{\{1,N+2\}}-a_{\{p,N+1\}}) \right.$$
$$\left.-(a_{\{N+1,N+2\}}) \right)\cdot v_p(i)+\left.\left(a_{\{1,N+1\}} - a_{\{N+1,N+2\}}\right)(v_p(i))-(a_{\{1,N+2\}})\cdot (v_p(i))\right)$$ $$=d.
 $$
In the case $p=1$, we have:
$$
d\pm\frac{1}{4}(\sum_{k=2 }^{N}(a_{k,N+1}+a_{1,N+2}+a_{\{N+1,N+2\}}-a_{\{k,N+2\}}-a_{\{1,N+1}\})\cdot v_k(i)+$$
$$ \left(a_{N+1,N+2}\right)\cdot (-v(i))+\left(a_{N+1,N+2}\right)\cdot (-v(i))$$
$$ -\sum_{k=2 }^{N}(a_{k,N+1}+a_{1,N+2}+a_{\{N+1,N+2\}}-a_{\{k,N+2\}}-a_{\{1,N+1}\})\cdot v_k(i)-$$
$$\left.\left(-a_{N+1,N+2}\right)\cdot (v(i))-\left(a_{N+1,N+2}\right)\cdot (-v(i))\right)=d.$$
Thus, $\phi_i$ is a rational function.
\end{proof}

\begin{remark}
We multiply the function by $\frac{1}{2}$, because locally the condition that the cycle closes leads to the function $\max$ $\{( \frac{1}{2}$ $\sum_{k=2 }^{N}$ $\left(a_{\{1,k\}}\right.$ $\left.-a_{\{k,N+1\}}\right)$ $v_k(i)$ $+\left(a_{\{1,N+2\}}\right.$ $\left.-a_{\{N+1,N+2\}}\right)$ $(-v(i))$ $+(a_{\{1,N+1\}})$ $\cdot v(i),0\}$. We changed the function slightly because of the symmetry we need for the orbit space structure.
\end{remark}

Now we can define the tropical orbit space we are interested in by constructing the tropical orbit space cut out by the rational functions $\phi_i$:
$$ \calM_{1,n,\trop}^{\text{lab}}(\mathbb{R}^r,\Delta)=  \phi_1\cdots \phi_r(\tilde{X}_{n,\Delta,r}^{\lab}/G_{N}^{\lab}), \text{ see Definition \ref{tropicalorbitspace}}.$$
The set of cones of $\calM_{1,n,\trop}^{\text{lab}}(\mathbb{R}^r,\Delta)$ is denoted by $X_{n,\Delta,r}^{\lab}$.
The rational functions assure that $A$ and $B$ are mapped to the same point.

\begin{example}
Let us consider the following map:
$$
\begin{minipage} {\linewidth}
\begin{center}
\input{closedcycle.pstex_t}
\end{center}
\end{minipage}
$$
To ensure that $h$, defined by $h(x_3)=\binom{0}{0}$, $h(x_1)=d\cdot \binom{0}{1}$, $h(x_2)=d\cdot \binom{0}{1}+ a \cdot \binom{1}{0}$ and $h(x_4)=c\cdot \binom{1}{0}$, is the map of a tropical curve we need $a=c$ and $b=d$, which is the case for elements of $\calM_{1,n,\trop}^{\text{lab}}(\mathbb{R}^r,\Delta)$ due to the fact that the direction vectors multiplied by the lengths sum up to zero.
\end{example} 

The rational functions $\phi_i$ define weights on the resulting facets on the divisor. Since the stabilizers are finite the divisor is a tropical orbit space as well. Let us now consider the case $r=2$. The weights we get from the definition of the rational function are the following.
\begin{enumerate}
 \item  The image of the cycle is two-dimensional:\\
	the condition, that the cycle closes up in $\RR^2$ is given by two independent linear equations $a_1$ and $a_2$ on the lengths of the edges of the cycle (which is a subset of the bounded edges); thus, the weight is given by the index of the map:
	$$
	\left( \begin{array}{c} a_1 \\ a_2 \end{array} \right): \ZZ^{2+\# \Gamma^1_0} \mapsto \ZZ^2.
	$$
 \item  The image of the cycle is one-dimensional:\\
	because of the chosen rational function, we obtain that there has to be one four-valent vertex on the cycle. If not, the weight would be zero on the corresponding face. Let $m\cdot u$ and $n \cdot u$ with $u\in \ZZ^2, m,n \in \ZZ,$ and $gcd(n,m)=1$ be the direction vectors of the cycle (this is the same notation as in \cite{KM}). If we denote by $v\in \ZZ^2$ the direction of one other edge adjacent to the $4$-valent vertex, the weight is $|\det(u,v)|$. If $n=m=1$ and no point lies on the circle, the stabilizer of the corresponding face consists of two elements. Thus, the weight of the orbit space has to be divided by $2$ in this case. 
 \item	The image of the circle is 0-dimensional. 
	Due to the rational function we get the weight $\frac{1}{2}\cdot |\det(u,v)|$ (notation as in \cite{KM}) if there is a $5-$valent vertex adjacent to the cycle, $u,v$ are two of the three non-cycle directions outgoing from the vertex. If there is no $5-$valent vertex the weight would be zero by the definition of the rational function.
\end{enumerate}

\begin{proposition}\label{evaluation}
 For $ i=1,\dots,n $ the map
\begin{eqnarray*}
    \ev_i:
      X_{n,\Delta,r}^{\text{lab}}&\rightarrow&\RR^r\\
      (\Gamma,x_1,\dots x_N,h) & \longmapsto & h(x_i)
  \end{eqnarray*}
is invariant under the group operations.
\end{proposition}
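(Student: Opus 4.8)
The plan is to write the map $\ev_i$ explicitly in the coordinates $(a_{\{k,l\}},b,v)$ of $\tilde{X}_{n,\Delta,r}^{\lab}=X_N\times\RR^r\times\ZZ^r$, where $a_{\{k,l\}}$ is the distance between $x_k$ and $x_l$ in the cut tree, $b=h(x_1)$, and $v$ is the direction of the cut edge, and then to verify invariance under the generators of $V_N$ and of $G_N^{\lab}$ exactly in the spirit of the proof of Proposition \ref{rationalmap}.

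First I would derive the formula. Let $\tilde{C}$ be the cut tree, with ends $x_1,\dots,x_N,A=x_{N+1},B=x_{N+2}$ and directions $v(x_{N+1})=v$, $v(x_{N+2})=-v$, $v(x_m)=0$ for $m\le n$, and $v(x_m)$ the prescribed degree direction for $n<m\le N$; by the balancing condition $\sum_m v(x_m)=0$. Since $\tilde{C}$ is a tree, $h(x_i)-b$ is the sum over the edges $E$ of the path from $x_1$ to $x_i$ of $l(E)$ times the direction of $E$ pointing towards $x_i$, and by balancing this direction equals $\sum v(x_m)$ over the ends $x_m$ lying on the $x_i$-side of $E$. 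Interchanging the two sums and using that the total length of those edges $E$ of $[x_1,x_i]$ for which $x_m$ lies on the $x_i$-side equals $\frac{1}{2}(a_{\{1,i\}}+a_{\{1,m\}}-a_{\{i,m\}})$ (the distance from $x_1$ to the median of $x_1,x_i,x_m$), the constant term drops out because $\sum_m v(x_m)=0$, and I obtain
$$ \ev_i(\Gamma,x_1,\dots,x_N,h)=b+\frac{1}{2}\sum_{m=n+1}^{N+2} v(x_m)\,\big(a_{\{1,m\}}-a_{\{i,m\}}\big), $$
with $v(x_{N+1})=v$ and $v(x_{N+2})=-v$. For $i=1$ this is just $b$, which the group fixes, so that case is immediate. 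Note that this expression has the same shape as the inner blocks of $\phi_i$; in fact the first block of $\phi_i$ is the $i$-th coordinate of $h(A)-b$ given by this formula, a useful consistency check against Proposition \ref{rationalmap}.

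With the formula in hand, the invariance checks run as in Proposition \ref{rationalmap}. For well-definedness on $V_N$ (i.e.\ invariance under adding $\Phi_N(b')$ and $c\cdot s$): adding $\Phi_N(b')$ changes every difference $a_{\{1,m\}}-a_{\{i,m\}}$ by the single constant $b'_1-b'_i$, which is annihilated upon multiplication by $\sum_m v(x_m)=0$; adding $c\cdot s$ leaves each difference $a_{\{1,m\}}-a_{\{i,m\}}$ unchanged, since $s_{\{1,m\}}=s_{\{i,m\}}$ for every $m$ (both $1$ and $i$ are $\le n$, hence different from $N+1$ and $N+2$). Under $I$ the coordinates indexed by $N+1$ and by $N+2$ are interchanged while $v\mapsto-v$, so the summands $m=N+1$ and $m=N+2$ are swapped together with the signs of their directions and the sum is preserved.

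The only real work, and the step I expect to be the main obstacle, is invariance under $M_p$, where $v\mapsto v-v(x_p)$ is combined with the prescribed linear action on the $a_{\{k,l\}}$. As in the $M_p$-part of the proof of Proposition \ref{rationalmap}, the many case distinctions in the definition of the matrix $M_p$ make this a lengthy substitution, and one must treat $p=1$ separately from $p\neq 1$; however, because the formula for $\ev_i$ has the same structure as the blocks of $\phi_i$ already handled there, the same cancellations apply and no new phenomenon arises.
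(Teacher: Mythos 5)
Your closed formula for $\ev_i$ is exactly the one the paper's proof starts from, and your derivation of it, as well as your verifications for $\Phi_N(b')$, $c\cdot s$ and $I$, are correct and coincide with the paper's. The genuine gap is precisely the step you deferred: for $M_p$ it is \emph{not} true that ``the same cancellations apply and no new phenomenon arises.'' Pure algebraic cancellation works only in the case $1\neq p\neq i$. For $p=1$ (and for $p=i$, a case you do not even separate out; the paper treats three cases, not two), after using $v_1=0$ and the balancing relation $\sum_{k=2}^{N}v_k=0$, the difference $\ev_i(M_1x)-\ev_i(x)$ does \emph{not} cancel: what survives is
$$\frac{1}{2}\sum_{k=2}^{N}\left(a_{\{k,N+1\}}-a_{\{k,N+2\}}\right)v_k-a_{\{N+1,N+2\}}\cdot v\;=\;-\left(h(A)-h(B)\right),$$
whose $j$-th coordinate is minus the difference of the two blocks inside the maximum defining $\phi_j$ --- the very quantity you identified in your ``consistency check.'' This expression does not vanish identically on $\tilde{X}_{n,\Delta,r}^{\lab}=X_N\times\RR^r\times\ZZ^r$; it vanishes exactly where the cycle closes up in $\RR^r$. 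So $\ev_i$ is genuinely not invariant under $M_1$ and $M_i$ on $\tilde{X}_{n,\Delta,r}^{\lab}$, and no substitution argument modeled on Proposition \ref{rationalmap} alone can close the gap: the invariance of $\phi_j$ there \emph{is} an identity (because $\phi_j$ is built from $\pm(h(A)-h(B))_j$ and a maximum), whereas the invariance of $\ev_i$ is not.

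The missing idea --- and the reason the proposition is stated on $X_{n,\Delta,r}^{\lab}$ rather than on $\tilde{X}_{n,\Delta,r}^{\lab}$ --- is that one must invoke the relation defining the domain: $X_{n,\Delta,r}^{\lab}$ is the set of cones of the divisor $\phi_1\cdots\phi_r\cdot(\tilde{X}_{n,\Delta,r}^{\lab}/G_N^{\lab})$, on whose support $h(A)=h(B)$ holds. This is exactly what the paper's proof does in the cases $p=1$ and $p=i$: the residual term is recognized as the expression from Proposition \ref{rationalmap} and is ``$0$ because of the rational function which we have used to construct $X_{n,\Delta,r}^{\lab}$.'' Your argument becomes correct once you carry out the $M_1$ and $M_i$ substitutions, exhibit the residual term $\pm(h(A)-h(B))$, and kill it using this cycle-closing relation. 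Geometrically the need for it is unavoidable: $M_1$ (resp.\ $M_i$) moves the cut point across the vertex where $x_1$ (resp.\ $x_i$) attaches to the cycle, which reroutes the path from $x_1$ to $x_i$ in the cut tree around the other side of the cycle, so the computed position $h(x_i)$ changes exactly by the defect by which the cycle fails to close.
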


\begin{proof}
The map $\ev_i$ is given by 
\begin{eqnarray}
\ev_i(x)&=&b+\frac{1}{2}\left(\sum_{k=2 }^{N} \left(a_{\{1,k\}}-a_{\{k,i\}}\right)v_k+\left(a_{\{1,N+1\}}-a_{\{N+1,i\}}\right)(v)\right.\nonumber\\ & &\left.+(a_{\{1,N+2\}}-a_{\{i,N+2\}})\cdot(-v)\vphantom{\sum_{k=2 }^{N}}\right).
\end{eqnarray}
Recall that $b=h(x_1)$. It is invariant under $s$, because the value added by $s$ to the differences $a_{\{1,N+1\}}$ $-a_{\{N+1,i\}}$ and $a_{\{1,N+2\}}-a_{\{i,N+2\}}$ is $0$.

The map $I$ changes only the order of the two last summands.

Thus, it remains to consider the map $M_p$. We have three cases: $p=1,p=i,1\neq p\neq i$. The sum we get differs from $(1)$ by the following expressions. 
Case $1\neq p\neq i$:
$$\frac{1}{2}\left(a_{\{1,N+1\}}+a_{\{p,N+2\}}+a_{\{N+1,N+2\}}-a_{\{1,N+2\}}-a_{\{p,N+1\}}-\right.$$
$$\left. (a_{\{i,N+1\}}+a_{\{p,N+2\}}+a_{\{N+1,N+2\}}-a_{\{i,N+2\}}-a_{\{p,N+1\}})\right)\cdot v_p$$
$$ +\frac{1}{2}\left(a_{\{1,N+1\}}-a_{\{N+1,i\}}\right)(-v_p)+\frac{1}{2}(a_{\{1,N+2\}}-a_{\{i,N+2\}})\cdot(v_p)=0.$$
Case $p=1$:
$$\sum_{k=2 }^{N} \frac{1}{2}\left(a_{\{k,N+1\}}+a_{\{1,N+2\}}+a_{\{N+1,N+2\}}-a_{\{k,N+2\}}-a_{\{1,N+1\}}\right)\cdot v_k+$$
$$\frac{1}{2}\left(-a_{\{N+1,N+2\}}\right)\cdot (v-v_1)+\frac{1}{2}\left(a_{\{1,N+1\}}-a_{\{N+1,i\}}\right)(-v_1)$$ $$+\frac{1}{2}\left(a_{\{N+1,N+2\}}\right)\cdot (-v+v_1)+\frac{1}{2}\left(a_{\{1,N+2\}}-a_{\{i,N+2\}}\right)\cdot(v_1)=0.$$
The last equation is true, because 
$$\sum_{k=2}^N (a_{1,N+2}+a_{N+1,N+2}-a_{1,N+1})v_k=0, v_1=0$$ 
and the rest of the sum 
\begin{multline}\left( \sum_{k=2 }^{N} \frac{1}{2}\left(a_{\{k,N+1\}}-a_{\{k,N+2\}}\right)\cdot v_k+ \frac{1}{2}\left(-a_{\{N+1,N+2\}}\right)\cdot (v)\right.\\ \left.
+\frac{1}{2}\left(a_{\{N+1,N+2\}}\right)\cdot (-v)\right) \nonumber
\end{multline} 
is equal to 
\begin{multline}
-\frac{1}{2}\left(\sum_{k=2 }^{N} \left(a_{\{1,k\}}-a_{\{k,N+1\}}\right)v_k
+\left(a_{\{1,N+2\}}\right.\left. -a_{\{N+1,N+2\}}\right)(-v)\right.\\ \left.+(a_{\{1,N+1\}})\cdot v\vphantom{\sum_{k=2 }^{N}}\right) +\left(\frac{1}{2}\left(\sum_{k=2 }^{N} \left(a_{\{1,k\}}-a_{\{k,N+2\}}\right)v_k\right.\right. \\
\left.\left. +\left(a_{\{1,N+1\}}- a_{\{N+1,N+2\}}\right)v+(a_{\{1,N+2\}})\cdot (-v)\vphantom{\sum_{k=2 }^{N}}\right)\vphantom{\frac{1}{2}}\right)\nonumber
\end{multline}
which is $0$ because of the rational function which we have used to constract $X_{n,\Delta,r}^{\lab}$ (see proposition \ref{rationalmap}).

Case $p=i$: 
$$ \frac{1}{2}\sum_{k=2 }^{N} -\left(a_{\{k,N+1\}}+a_{\{i,N+2\}}+a_{\{N+1,N+2\}}-a_{\{k,N+2\}}-a_{\{i,N+1\}}\right)\cdot v_k+$$ 
$$\frac{1}{2}\left(a_{\{N+1,N+2\}}\right)\cdot (v-v_i)+\frac{1}{2}\left(a_{\{1,N+1\}}-a_{\{N+1,i\}}\right)(-v_i)$$
$$+\frac{1}{2}\left(-a_{\{N+1,N+2\}}\right)\cdot (-v+v_i) +\frac{1}{2}\left(a_{\{1,N+2\}}-a_{\{i,N+2\}}\right)\cdot(v_i)=0.$$
(Same reason as above.)
\end{proof}

\begin{definition}[Evaluation map] \label{def-ev}
  For $ i=1,\dots,n $ the map
  \begin{eqnarray*}
    \ev_i:
      \calM_{1,n,\trop}^{\text{lab}}(\mathbb{R}^r,\Delta)&\rightarrow&\RR^r\\
      (\Gamma,x_1,\dots x_N,h) & \longmapsto & h(x_i)
  \end{eqnarray*}
  is called the {\em $i$-th evaluation map} (note that this is well-defined
  for the contracted ends since for them $ h(x_i) $ is a point in $ \RR^r $).
\end{definition}

\begin{proposition} \label{prop-evmorphism}
  With the tropical orbit space structure given above the
  evaluation maps $ \ev_i: \calM_{1,n,\trop}^{\lab}(\mathbb{R}^r,\Delta)
  \rightarrow \RR^r$ are morphisms of orbit spaces (in the sense of Definition 
\ref{def-morphism} and $\RR^r$ equipped with the trivial orbit space structure).
\end{proposition}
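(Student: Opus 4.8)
The plan is to set $f_1 = \ev_i$ and to let $f_2 \colon G_N^{\lab} \to \{e\}$ be the trivial homomorphism into the (trivial) group of the target, and then to check the four conditions of Definition \ref{def-morphism}, viewing $\RR^r$ as a single $r$-dimensional cone carrying the trivial orbit space structure. Three of the four conditions are essentially immediate. Condition (a) holds with $\widetilde{\sigma} = \RR^r$ for every cone $\sigma$, since of course $\ev_i(\sigma) \subseteq \RR^r$. Condition (d) is exactly the content of Proposition \ref{evaluation}: because $\ev_i$ is invariant under $G_N^{\lab}$ and $f_2(g) = \id$, we have $\ev_i(g(\sigma)) = \ev_i(\sigma) = f_2(g)(\ev_i(\sigma))$. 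Continuity of $f_1$ and the fact that it descends to the quotient follow from the affine-linear formula $(1)$ together with this same invariance.

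For condition (b) I would observe that on each cone $\sigma$ --- on which the combinatorial type is fixed --- the map $\ev_i$ is integer affine linear in the natural edge-length coordinates. Although $(1)$ carries a factor $\tfrac12$, rewriting the evaluation as $h(x_i) = h(x_1) + \sum_E \pm \ell(E)\, v(E)$, the signed sum over the unique path from $x_1$ to $x_i$ of the edge lengths times the integral direction vectors, displays it as an integral linear map in the lengths and in $b = h(x_1)$; the halves disappear because each combination such as $a_{\{1,k\}} - a_{\{k,i\}}$ computes twice the relevant path contribution. Since the target fan attached to $\RR^r$ (Definition \ref{defn-openfan}) consists of the single cone $\RR^r$, such an integral linear map automatically maps $\widetilde{F}_\sigma^{X}$ into it and hence induces a morphism of fans, which is condition (b).

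The one condition requiring genuine work is (c), which asks for a refinement of $\calM_{1,n,\trop}^{\lab}(\RR^r,\Delta)$ after which two cones whose images under $\ev_i$ overlap in top dimension have equal images; as the remark following the explanation records, this is precisely what forces the images to form a polyhedral complex. Here the key simplification comes again from Proposition \ref{evaluation}: since $\ev_i(g(\sigma)) = \ev_i(\sigma)$, all cones in a given $G_N^{\lab}$-orbit share the same image, and as the tropical orbit space has only finitely many classes the set $\{\ev_i(\sigma)\}$ consists of finitely many cones of $\RR^r$. I would then choose a common polyhedral refinement $\mathcal{F}$ of these finitely many cones together with their faces, and refine the source by pulling $\mathcal{F}$ back along $\ev_i$. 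Because the images, and hence the pulled-back subdivisions, are constant on $G_N^{\lab}$-orbits and the stabilizers are finite, this refinement is equivariant and descends to $\calM_{1,n,\trop}^{\lab}(\RR^r,\Delta)$. After it, the image of every top-dimensional cone is a single cone of $\mathcal{F}$, so images that meet in full dimension coincide, which establishes (c) and completes the proof that $\ev_i$ is a morphism of orbit spaces. The main obstacle is thus bookkeeping condition (c); once one notices that images depend only on the orbit class, it reduces to a finite common-refinement argument.
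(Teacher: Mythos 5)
Your proof is correct, and for conditions (a), (b) and (d) it is essentially the paper's own argument: (a) with $\widetilde{\sigma}=\RR^r$, (b) via integer affine linearity of $\ev_i$ on each cone (the paper just cites the rational case in [GKM]; your remark about the factors $\tfrac12$ cancelling in the path formula is the right justification), and (d) via Proposition \ref{evaluation}. The genuine divergence is condition (c). The paper disposes of it in one line: every cone of $\calM_{1,n,\trop}^{\lab}(\mathbb{R}^r,\Delta)$ is mapped \emph{onto} the whole of $\RR^r$, because each cone contains the full translation factor --- the coordinate $b=h(x_1)$ coming from the $\RR^r$ factor of $\tilde{X}_{n,\Delta,r}^{\lab}=X_N\times\RR^r\times\ZZ^r$ varies freely within a fixed combinatorial type (the cutting functions $\phi_i$ do not involve $b$), and $\ev_i(x)=b+(\text{function of the lengths and directions})$ --- so any two images coincide trivially and no refinement is needed at all. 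Your argument instead runs general machinery: finitely many classes plus $G_N^{\lab}$-invariance give finitely many image cones, you take a common refinement of these and pull it back along $\ev_i$, equivariantly since $\ev_i\circ g=\ev_i$ makes each preimage $\ev_i^{-1}(F)$ stable under $G_N^{\lab}$, after which images of refined pieces are cones (or unions of faces of cones) of the refinement, so overlap in top dimension forces equality. This is sound, just heavier than necessary here. What your version buys is robustness: it does not use surjectivity of $\ev_i$ on each cone, so the same argument would apply to maps where that fails --- notably this is the situation of $f=\ev_1\times\cdots\times\ev_n\times j$ in Proposition \ref{morphism}, where the paper must switch to a different justification of (c) (via the open ends lying over $j=0$). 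What the paper's version buys is brevity and the explicit fact $\ev_i(\sigma)=\RR^r$, which feeds into the surjectivity used in the final theorem.
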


\begin{proof}
Since continuity is clear, we have to check conditions $a-d$ in Definition \ref{def-morphism}. Since we can move the curve arbitrarily in $\RR^r$, condition $a$ is clear. Condition $b$ is the same as the case of fans treated in \cite{GKM}. Condition $c$ is clear since each cone is mapped to the whole $\RR^r$ and the last condition follows from Proposition \ref{evaluation}.
\end{proof}

\begin{proposition}\label{morphism}
The map $f=\ev_1 \times \cdots \times ev_n \times j:\calM_{1,n,\trop}^{\lab}(\mathbb{R}^r,\Delta)\rightarrow \RR^{(rn+1)}$ is a morphism of orbit spaces.
\end{proposition}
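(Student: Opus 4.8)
The plan is to verify the four conditions $(a)$--$(d)$ of Definition~\ref{def-morphism} for the product map $f=\ev_1\times\cdots\times\ev_n\times j$, where the target $\RR^{rn+1}=(\RR^r)^n\times\RR$ carries the trivial orbit space structure, so that the accompanying group homomorphism $f_2\colon G_N^{\lab}\to\{e\}$ is trivial. Since the maps $\ev_1,\dots,\ev_n$ are already known to be morphisms by Proposition~\ref{prop-evmorphism}, the reduction amounts to treating the remaining factor $j$ and then assembling all factors into a single morphism.

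First I would show that $j$ is a morphism $\calM_{1,n,\trop}^{\lab}(\RR^r,\Delta)\to\RR$. The $j$-invariant is the length of the cycle, and after cutting the curve and inserting the leaves $A=x_{N+1}$ and $B=x_{N+2}$ this length is exactly the distance $\dist_\Gamma(A,B)$; in the chosen coordinates it is the single entry $a_{\{N+1,N+2\}}$. I would first check that this entry descends to $V_N/G_N$: it vanishes on $s$ and on the image of $\Phi_N$ (since $\tilde b_{N+1}=\tilde b_{N+2}=0$), so it is a well-defined linear coordinate, and it is $G_N^{\lab}$-invariant because $I$ fixes the pair $\{N+1,N+2\}$ and, for $p\le n<N+1$, the only nonzero entry of $M_p$ in the $(N+1,N+2)$-row is the diagonal one. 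Thus $j$ is an integer-linear invariant coordinate, which makes conditions $(a)$, $(b)$ and $(d)$ immediate; this computation is routine and parallels the invariance argument of Proposition~\ref{evaluation}.

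Next, conditions $(a)$, $(b)$ and $(d)$ for the product $f$ follow componentwise. Condition $(a)$ holds trivially because the target is the single cone $\RR^{rn+1}$. Condition $(b)$ holds because $f$ is integer affine-linear on each cone, being a product of the integer affine-linear maps $\ev_i$ (cf.\ the explicit formula in Proposition~\ref{evaluation}) with the coordinate $j$, so the induced maps on the associated open fans are fan morphisms. Condition $(d)$ holds because each $\ev_i$ is $G_N^{\lab}$-invariant by Proposition~\ref{evaluation} and $j$ is invariant by the previous paragraph, so $f_1(g(\sigma))=f_1(\sigma)=f_2(g)(f_1(\sigma))$ as the target group is trivial.

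The main obstacle is condition $(c)$. For a single evaluation map it is trivial because every cone surjects onto $\RR^r$ (as in the proof of Proposition~\ref{prop-evmorphism}), but for $f$ the images of the facets are proper full-dimensional cones of $\RR^{rn+1}$ that may overlap only partially. My plan is to establish $(c)$ by refinement: since $f$ is affine-linear on each cone, the images $f(\sigma)$ are polyhedral cones and there are only finitely many classes of them, so I would pull back a common refinement of these image cones to the source, subdividing $\calM_{1,n,\trop}^{\lab}(\RR^r,\Delta)$ so that each cone maps onto a single cone of that common refinement. After this refinement any two cones whose images agree in full dimension have equal images, as required. The delicate point---and the very reason condition $(c)$ is imposed, as illustrated by the pathological projection of two intervals discussed after Definition~\ref{def-morphism}---is to verify that this refinement is compatible with the $G_N^{\lab}$-action and introduces no open/closed boundary mismatch; here one uses that the cones arise from combinatorial types whose boundary incidences are respected by the linear map $f$, so the relative interiors map to relative interiors and the images genuinely fit together into a polyhedral complex.
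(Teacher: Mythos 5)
Your overall architecture (identify $j$ with the invariant coordinate $a_{\{N+1,N+2\}}$, get conditions (a), (b), (d) of Definition~\ref{def-morphism} componentwise from Proposition~\ref{evaluation} and Proposition~\ref{prop-evmorphism}, then confront condition (c) as the real issue) coincides with the paper's, and your verifications for $j$ (vanishing on $s$ and on the image of $\Phi_N$, invariance under $I$ and the $M_p$) and for (a), (b), (d) are correct. The gap is in your treatment of condition (c), which is exactly where the proposition has its only substantive content.

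At the crucial step of your refinement plan you must rule out an open/closed boundary mismatch, and the reason you offer --- that ``the cones arise from combinatorial types whose boundary incidences are respected by the linear map $f$, so the relative interiors map to relative interiors'' --- is not a valid argument: it applies verbatim to the paper's own two-interval example after Definition~\ref{def-morphism} (there the map is even injective, so relative interiors certainly go to relative interiors), and in that example condition (c) genuinely fails and no refinement can repair it. Keep in mind that the pieces of a refinement must be closed in the support, so the unfixable failure mode is precisely: a cone whose image has a missing (open) face lying in the interior of the image of another cone; pulling back a ``common refinement'' is then impossible. The fact that excludes this here --- and it is the single observation the paper's proof rests on --- is that the only non-closed faces of the cones of $\calM_{1,n,\trop}^{\lab}(\RR^r,\Delta)$ are those where the $j$-invariant tends to $0$ (a curve with contracted cycle has genus $0$ and is not in the space), and $j$ is itself one of the coordinate functions of $f$. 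Consequently every image $f(\sigma)$ has the form (closed cone) $\cap\ \{j>0\}$: all missing boundary faces of all images lie in the hyperplane $\{j=0\}$, which is disjoint from every image. With this observation all images are relatively closed in the open half-space $\{j>0\}$, and your common-refinement pullback goes through exactly as in the closed case (equivariance being automatic since every coordinate of $f$ is $G_N^{\lab}$-invariant). Without it, your argument for (c) cannot distinguish the present situation from the counterexample, so as written the proof is incomplete at its decisive point.
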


\begin{proof}
Since all open ends in the cones of the moduli space are coming from the limit of the $j$-invariant to $0$, condition $c$ of Definition \ref{def-morphism} is fulfilled.
Thus, the statement follows from Proposition \ref{prop-evmorphism} and the fact that $j$ is the projection on the coordinate $\RR_{\{A,B\}}$. 

\end{proof}

\begin{theorem}
Let $d\geq 1$ and $n=3d-1$. Then the number of parameterized labeled $n$-marked tropical curves of genus $1$ and of degree $d$ with fixed j-invariant which pass through $n$ points in general position in $\RR^2$ is independent of the choice of the configuration of points.
\end{theorem}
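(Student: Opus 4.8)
The plan is to realize the desired count as the generic number of preimages of the morphism
$f=\ev_1\times\cdots\times\ev_n\times j\colon \calM_{1,n,\trop}^{\lab}(\RR^2,d)\to\RR^{2n+1}$
of Proposition \ref{morphism}, and to conclude with Corollary \ref{cor-image}. First I would match the dimensions of source and target. For $n=3d-1$ and $\Delta$ the degree-$d$ datum one has $\#\Delta=3d$ and $N=n+3d=6d-1$; since $\dim X_N=\dim\calM_{1,N}=N$ and $\tilde X_{n,\Delta,2}^{\lab}=X_N\times\RR^2\times\ZZ^2$ (with $\ZZ^2$ discrete), the space $\tilde X_{n,\Delta,2}^{\lab}$ has dimension $N+2$. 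Cutting out the two rational functions $\phi_1,\phi_2$ of Proposition \ref{rationalmap} each drops the dimension by one, so $\dim\calM_{1,n,\trop}^{\lab}(\RR^2,d)=N=6d-1=2n+1=\dim\RR^{2n+1}$.

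Next I would verify the hypotheses of Corollary \ref{cor-image}. The source is a tropical orbit space: it is the divisor $\phi_1\cdot\phi_2(\tilde X_{n,\Delta,2}^{\lab}/G_N^{\lab})$, and because all stabilizers are finite it is again a tropical orbit space by Remark \ref{indep} and Proposition \ref{prop-image}. The target $\RR^{2n+1}$, carried with the trivial orbit space structure, is a single full-dimensional cone and is therefore irreducible. The map $f$ is a morphism by Proposition \ref{morphism}. The remaining, and decisive, hypothesis is the equality $f(\calM_{1,n,\trop}^{\lab}(\RR^2,d))=\RR^{2n+1}$ of topological spaces. Here I would argue that by Proposition \ref{prop-image} the image is itself a $(2n+1)$-dimensional tropical orbit space inside $\RR^{2n+1}$; exhibiting one combinatorial type on which $f$ is injective with nonzero multiplicity shows the image is full-dimensional, and since $\RR^{2n+1}$ is irreducible the image must be all of $\RR^{2n+1}$.

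With these inputs Corollary \ref{cor-image} produces a lower-dimensional $Y_0\subset\RR^{2n+1}$ so that for every $Q\in\RR^{2n+1}\setminus|Y_0|$ the weighted count $\sum_{[P]\colon f([P])=[Q]}\mult_{[P]}f$ is independent of $Q$. It then remains to read off the statement. A general point $Q=(p_1,\dots,p_n,j_0)$ records $n$ points in general position in $\RR^2$ together with a fixed generic value $j_0$ of the $j$-invariant, and by the definition of $f$ its preimages $[P]$ are exactly the classes of parameterized labeled genus-$1$ curves of degree $d$ passing through $p_1,\dots,p_n$ with $j$-invariant $j_0$. Since $Y/H=\RR^{2n+1}$ is trivial, $\mult_{[P]}f=\omega_{X/G}(\sigma_{[P]})\cdot|\ZZ^{2n+1}/f(\Lambda_{\sigma_{[P]}})|$ is precisely the enumerative multiplicity, so the count is independent of the chosen configuration.

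The main obstacle is the surjectivity $f(\calM_{1,n,\trop}^{\lab}(\RR^2,d))=\RR^{2n+1}$. In the fan setting of \cite{GKM} this is automatic, but for tropical orbit spaces, which are built from non-closed polyhedra, Corollary \ref{cor-image} genuinely requires $f(X/G)=Y/H$ (as the open-interval example following the corollary shows). Thus the real work is to establish full-dimensionality of the image and, via irreducibility of the target, its equality with $\RR^{2n+1}$; full-dimensionality reduces to producing a single generic configuration carrying a curve on which $f$ is a local isomorphism. A secondary point to control is that the exceptional locus $Y_0$ stays within the non-general configurations, so that a point configuration in general position together with a generic $j_0$ indeed lands in $\RR^{2n+1}\setminus|Y_0|$, where the constancy applies.
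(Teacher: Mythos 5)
Your overall strategy --- dimension count, then surjectivity of $f=\ev_1\times\cdots\times\ev_n\times j$, then Corollary \ref{cor-image} via Proposition \ref{morphism} --- is the same as the paper's, and your dimension count is correct. But the step you yourself identify as decisive, the surjectivity, is handled incorrectly, in two ways. First, the equality $f(\calM_{1,n,\trop}^{\lab}(\RR^2,d))=\RR^{2n+1}$ that you set out to prove is simply false: every curve in this moduli space has strictly positive $j$-invariant (the cycle length is a sum of positive edge lengths), so the image of $f$ is contained in $\RR^{2n}\times\RR_{>0}$ and can never be all of $\RR^{2n+1}$. The target to which Corollary \ref{cor-image} must be applied is $\RR^{2n}\times\RR_{>0}$; this is what the paper's proof means by saying the moduli space ``has the same dimension as $\RR^{(rn)}\times\RR_{>0}$''.

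Second, and more fundamentally, your mechanism for surjectivity --- the image is a full-dimensional tropical orbit space, the target is irreducible, hence the image equals the target --- is exactly the inference that fails for orbit spaces built from non-closed polyhedra. Irreducibility, as defined in the paper, only forces $Y=X$ for a full-dimensional suborbit space $Y$ that is \emph{closed} in $X$, and the image of a morphism of orbit spaces need not be closed: the paper's open-interval example directly after Corollary \ref{cor-image} (which you cite) is precisely a full-dimensional image inside an irreducible target that is not the whole target. So reducing the problem to ``producing a single combinatorial type on which $f$ is a local isomorphism'' is not a valid reduction; full-dimensionality alone proves nothing here. The missing idea --- and the actual content of the paper's one-line surjectivity argument --- is that all the open ends (the missing boundary faces) of $\calM_{1,n,\trop}^{\lab}(\RR^2,d)$ are mapped by $f$ into the locus $\{j=0\}$, which lies outside the corrected target $\RR^{2n}\times\RR_{>0}$. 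Hence, inside that target, the image is a closed, full-dimensional, balanced complex, and the balancing condition (equivalently, irreducibility applied to a closed full-dimensional suborbit space) then forces the nonempty image to be the entire target. Without this observation about where the open boundary goes, surjectivity cannot be established and Corollary \ref{cor-image} cannot be invoked.
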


\begin{proof}
For $n=3d-1$ points $\calM_{1,n,\trop}^{\lab}(\mathbb{R}^2,d)$ has the same dimension as $\RR^{(rn)}\times \RR_{>0}$. Since all open ends are mapped to $j$-invariant equal $0$, surjectivity follows by the balancing condition. Thus, Proposition \ref{morphism} and Corollary \ref{cor-image} imply the theorem.
\end{proof}

\begin {thebibliography}{XXX}

\bibitem [AR]{AR} L. Allermann and J. Rau, \textsl {First Steps in Tropical Intersection Theory}, \preprint {math.AG}{0709.3705}.

\bibitem [GKM]{GKM} A. Gathmann, M.Kerber and H. Markwig, \textsl {Tropical fans and the moduli spaces of tropical curves}, \preprint {math.AG}{0708.2268}.

\bibitem [GM1]{GM1} A. Gathmann, H. Markwig,
  \textsl {The numbers of tropical plane curves through points in general
  position}, J. Reine Angew.\ Math.\ \textbf {602} (2007), 155--177.

\bibitem [GM2]{GM2} A. Gathmann, H. Markwig, \textsl {The Caporaso-Harris
  formula and plane relative Gromov-Witten invariants in tropical geometry},
  Math.\ Ann.\ \textbf {338} (2007), 845--868.

\bibitem [GM3]{GM3} A. Gathmann and H. Markwig, \textsl {Kontsevich's formula and
  the WDVV equations in tropical geometry}, Adv.\ Math.\ 217 (2008),537-560.

\bibitem [KM]{KM} M. Kerber and H. Markwig, \textsl {Counting tropical elliptic plane curves with fixed j-invariant }, \preprint {math.AG}{0608472}.

\bibitem[KMM1]{KMM1} E. Katz, H. Markwig, T. Markwig, \textsl{The $j$-invariant of a plane tropical cubic}, \preprint {math.AG} {0709.3785}.

\bibitem[KMM2]{KMM2} E. Katz, H. Markwig, T. Markwig, \textsl{The tropical $j$-invariant}, \preprint {math.AG} {0803.4021}.

\bibitem[M1]{M1} G. Mikhalkin, \textsl {Enumerative tropical geometry in
  $ \RR^2 $}, J. Amer.\ Math.\ Soc.\ \textbf {18} (2005), 313--377.

\bibitem [M2]{M2} G. Mikhalkin, \textsl {Moduli spaces of rational tropical
  curves}, \preprint {arXiv}{0704.0839}.

\bibitem[M3]{M3} G. Mikhalkin, \textsl{Tropical geometry and its applications}, International Congress of Mathematicians, volume \RM{2}, pages 827-852, Eur. Math. Soc., 2006.

\bibitem[NS]{NS} T. Nishinou and B. Siebert, \textsl{Toric degenerations of toric varieties and tropical curves}, Duke Math. J. \textbf{135} (2006), no. 1, 1-51.

\bibitem[Sh]{Sh} E. Shustin, \textsl{A tropical approach to enumerative geometry}, Algebra\ i\ Analiz\ \textbf{17} (2005), no. 2, 170-214 (English translation: St. Petersburg Math. J. \textbf{17}, (2006), 343-375).

\bibitem[Sp]{Sp} D. Speyer, \textsl{Uniformizing tropical curves i: Genus zero and one}, \preprint {math.AG}{0711.2677}.

\bibitem[ST]{ST} E. Shustin and I. Tyomkin, \textsl{Patchworking singular algebraic curves \RM{1}}, Israel\ J. Math.\ \textbf{151} (2006),125.

\bibitem[V]{V} M. Vigelnad, \textsl{The group law on a tropical elliptic curve}, \preprint  {math.AG}{0411485}.

\end {thebibliography}

\end{document}